\theoremstyle{plain}
\newtheorem{theorem}{Theorem}[section]
\newtheorem{lemma}[theorem]{Lemma}
\newtheorem{corollary}[theorem]{Corollary}
\newtheorem{proposition}[theorem]{Proposition}
\theoremstyle{definition}
\theoremstyle{remark}
      \def\dR{{\mathbb R}}
   \def\dZ{{\mathbb Z}}
   \def\cB{{\mathcal B}}   
      \def\cF{{\mathcal F}}
      \def\cL{{\mathcal L}}
\newcommand{\hg}[2]{\,\mbox{}_{#1}F_{ #2}\!\!}
\newcommand{\argu}[3]{\left[\begin{array}{c} #1\\#2\end{array} ; #3\right]}
\begin{document}

\articletype{Article}

\title{The single-indexed exceptional Krawtchouk polynomials}

\author{
\name{Hiroshi Miki\textsuperscript{a} and Satoshi Tsujimoto\textsuperscript{b}\thanks{CONTACT S. Tsujimoto Email: tsujimoto.satoshi.5s@kyoto-u.jp} and Luc Vinet\textsuperscript{c,d}}
\affil{\textsuperscript{a} Meteorological College, Asahi-Cho, Kashiwa 277 0852,
Japan;\\
\textsuperscript{b} Department of Applied Mathematics and Physics, Graduate School of Informatics,
Kyoto University, Sakyo-Ku, Kyoto 606 8501, Japan;\\
\textsuperscript{c} Centre de recherches math\'{e}matiques, Universit\'{e} de Montr\'{e}al,
PO Box 6128, Centre-ville Station, Montr\'{e}al (Qu\'{e}bec), H3C 3J7,
Canada;\\
\textsuperscript{d} Institut de valorisation des donn\'{e}es (IVADO), Montr\`{e}al (Qu\'{e}bec), H2S 3H1, Canada}
}

\maketitle

\begin{abstract}
The Darboux transformations of Krawtchouk polynomials are investigated and all possible exceptional Krawtchouk polynomials obtainable from a single-step Darboux transformation are considered.
The properties of these exceptional Krawtchouk polynomials including the Diophantine ones and the recurrence relations are obtained.
\end{abstract}

\begin{keywords}
Krawtchouk polynomials; exceptional Krawtchouk polynomials; orthogonal polynomials; Diophantine property; recurrence relations
\end{keywords}
\section{Introduction}

Classical orthogonal polynomials (COPs), defined as polynomial eigenfunctions of a second order differential/($q$-)difference operator, appear and play important roles in many problems \cite{andrews1985classical}.
Generalizations of the COPs have been proposed from many viewpoints and an important one is that of the exceptional orthogonal polynomials (XOPs) .
The definition of the XOPs is almost the same as that of the COPs.
The difference lies in the absence of several degrees in XOP families although these ensembles nevertheless provide complete bases for the corresponding Hilbert spaces \cite{gomez2009extended,gomez2010extension}.  
The XOPs can be obtained from the COPs by applying iterated Darboux transformations \cite{gomez2010exceptional}.
Due to this correspondence, the Askey-scheme of the COPs has been extended to XOPs and the exceptional Askey-Wilson polynomials have been extensively studied \cite{odake2013multi,duran2015constructing,duran2020exceptional}. \par 
We shall focus on the Krawtchouk polynomials which are COPs of a discrete variable and shall consider their exceptional counterparts. The ordinary Krawtchouk polynomials have applications in many areas including signal processing, coding theory and so on \cite{levenshtein1995krawtchouk,yap2003image}.
The authors recently found that the exceptional Krawtchouk ($X$-Krawtchouk) polynomials lead to interesting continuous-time classical and quantum walks \cite{mtv}. With further applications in mind, this study motivates the examination of the properties of these $X$-Krawtchouk polynomials. 
It was already pointed out that the $X$-Krawtchouk polynomials can formally be obtained from the exceptional Meixner ($X$-Meixner) polynomials by choosing a different parametrization \cite{duran2014exceptionalc}.
In that sense, the properties of the $X$-Krawtchouk polynomials are expected to be obtainable from those of the $X$-Meixner polynomials.
However, the $X$-Krawtchouk polynomials are finite orthogonal polynomials and hence exhibit specific features that can not be seen in the $X$-Meixner case. \par 
This paper aims to characterize explicitly the $X$-Krawtchouk polynomials by applying a Darboux transformation directly to the ordinary Krawtchouk polynomials.
It should be noted that multi-step Darboux transformations are usually considered in the construction of XOPs.
However, we shall only examine here the single-step transformation so as to work out all details.
We shall thus proceed to determine the properties of these $X$-Krawtchouk polynomials including the Diophantine ones, the recurrence formulas and so on. \par 
This paper is organized as follows.
In Section \ref{sec2}, the basics of the Krawtchouk polynomials are introduced as polynomial eigenfunctions of the Krawtchouk operator and their properties are reviewed from that angle. All eigenfunctions of the Krawtchouk operator will then be considered.
In Section \ref{sec3}, we introduce the Darboux transformation and the $X$-Krawtchouk operator from the eigenfunctions of the Krawtchouk operator.
We then construct the $X$-Krawtchouk polynomials by this Darboux transformation in Section \ref{sec4}.
The properties of the $X$-Krawtchouk polynomials including their orthogonality, recurrence relations, etc. are spelled out in Section \ref{sec5}.
We conclude with a summary.

\section{Krawtchouk polynomials and their difference operator}\label{sec2}
We first review the definition and basic properties of the Krawtchouk polynomials, which are necessary for the discussion that follows.

Let $N$ be a positive integer and $p \in (0,1)$.
For $n = 0,1,2,\ldots$, the Krawtchouk polynomials of $n$th degree in monic form are given by
\begin{align}
 K_n(x) &= K_n(x;p,N)
=(-N)_n\, p^n \hg{2}{1}\argu{\!\!-x,-n}{-N}{\dfrac{1}{p}}\nonumber\\
&=\sum_{j=0}^{n}\dfrac{(-n)_{j}(-N+j)_{n-j}}{j!} \,p^{n-j}\,(-x)_j
\end{align}

with $(a)_n$ the standard Pochhammer symbol defined by
\begin{equation}
(a)_n=\begin{cases}
1& (n=0)\\
a(a+1)\cdots (a+n-1) & (n=1,2,\ldots )
\end{cases}.
\end{equation}
The monic polynomial sequence $\{K_n(x)\}_{n \in \dZ_{\ge 0}}$ satisfies the three-term recurrence relation:
\begin{align}
\begin{split}
 x K_n(x) &= K_{n+1}(x)+\left(p (N-n)+n(1-p)\right)K_n(x) \\
 &+ (N+1-n) n p(1-p) K_{n-1}(x)
\label{K:TRR}
\end{split}
\end{align}
with the initial values $K_{0}(x)=1$ and $K_1(x)=x-Np$.
In the following, we will omit the dependence of the function on $p$ and $N$ and write $F(x)=F(x;p,N)$ so long as there is no confusion.

The set consisting of $K_0(x),K_1(x),\ldots,K_N(x)$ is known to verify the discrete orthogonality relation associated to a binomial distribution: 
for $n,m \in \{0,1,2,\ldots,N\}$, 
\begin{equation}
 \sum_{x =0}^{N}  w(x) K_n(x) K_m(x) = h_n \delta_{n,m},
\end{equation}
where 
\begin{align}
\begin{split}
  &w(x) = \dfrac{N! }{x!(N-x)!} \, p^x \,(1-p)^{N-x} , \\
  &h_n = (-1)^n (-N)_n \,n! \,p^n \,(1-p)^n.
  \end{split}
\end{align}
Note that the Krawtchouk polynomials possess several symmetries some of which are expressed in the relations recorded below for later usage:
\begin{align}
\begin{split}\label{K:dual}
&K_n(x;1-p,a) = (-1)^n K_n(a - x;p,a)\quad (a\in \mathbb{R}) \\
& (-N)_x p^xK_{n}(x;p,N) = {(-N)_n p^n} K_x(n;p,N) \quad (x \in \{0,1,2,\ldots,N\}),\\
& \frac{K_{N-n}(x;p,N)}{K_{n}(N-x;p,N)} =  \dfrac{(N-n)!}{n!}(p-1)^{x-n}(-1)^N p^{N-x-n}  \quad (x \in \{0,1,2,\ldots,N\}),\\
& \left( \frac{p-1}{p}\right)^x K_{n}(x;1-p,N)=\left( \frac{1-p}{p}\right)^n \frac{ (-N)_n}{p^N(-N)_{N-n}}K_{N-n}(x;p,N).
\end{split}
\end{align}
Furthermore, the following additional formulas will also prove useful:
\begin{align}\label{K:rec_variant}
\begin{split}
(x-N) K_n(x+1;p,N) &= K_{n+1}(x;p,N) + (2n-N)(1-p) K_{n}(x;p,N) \\
& + n(n-N-1)(1-p)^2 K_{n-1}(x;p,N),\\
K_n(x+1;p,N) & = K_n(x;p,N)+ n K_{n-1}(x;p,N-1),\\
K_n(x+1;p,N+1)&=K_n(x;p,N) + n (1-p) K_{n-1}(x;p,N),\\
K_n(x;p,N+1)&=K_n(x;p,N) + n (-p) K_{n-1}(x;p,N).
\end{split}
\end{align}
There exists a factorization property of the Krawtchouk polynomials when $n\ge N+1$ as
\begin{align}
 K_n(x) = K_{N+1}(x) \,Q_{n-N-1}(x),
\label{K:factorprop}
\end{align}
where 
\begin{align}
\begin{split}
 &K_{N+1}(x)=K_{N+1}(x;p,N) = x(x-1)(x-2)\cdots(x-N),\\
 &Q_{m}(x) = K_{m}(x-N-1;p,-N-2).
 \end{split}
\end{align}
It should be mentioned that the relation \eqref{K:factorprop} is usually called ``Diophantine property'' \cite{bruschi2009additional,chen2010hypergeometric} of the Krawtchouk polynomials.

Let $\cL$ be the second order difference operator defined by
\begin{align}
\label{op:Krawtchouk}
 \cL = p(N-x) (T-I) + x(1-p)(T^{-1}-I)
\end{align}
where $T$ is the shift operator $T[f](x) = f(x+1)$ and $I$ is the identity operator. 
The Krawtchouk polynomials are known to be polynomials eigenfunctions of $\cL $:
\begin{align}
 \cL K_n(x;p,N) = \lambda_n K_n(x;p,N) 
\end{align}
with  $\lambda_n = -n$.
We shall henceforth call $\cL$ the Krawtchouk operator.\par 
For later use, we shall introduce all the possible eigenfunctions of the Krawtchouk operator $\cL$ of the quasi-polynomial form: 
\begin{align}
\begin{split}
&\cL \phi (x)=\lambda \phi (x),\\
&\phi (x)=\xi (x)P_n(x)\quad (P_n(x)\in \mathbb{R}[x]).
\end{split}
\end{align}
It is readily seen that $P_n(x)$ satisfy
\begin{equation}
(\xi (x))^{-1}\cL \xi (x)P_n(x)=\lambda P_n(x),
\end{equation}
which implies that $P_n(x)$ is a polynomial eigenfunction of the transformed operator $\tilde{\cL}=(\xi (x))^{-1}\cL \xi (x)$. 
Here we choose $\xi (x)$ so that the transformed operator $\tilde{\cL}$ is again a Krawtchouk operator with different parameters:
\begin{equation}
\tilde{\cL}=\cL\big|_{x=\tilde{x},p=\tilde{p},N=\tilde{N}},
\end{equation} 
which is related with shape invariance in quantum mechanics \cite{odake2010another}.
From the ensuing necessary conditions on $\xi(x)$, we can then obtain the following four kinds of eigen-pairs $(\lambda, \phi(x))$:
\begin{eqnarray}
 (\lambda, \phi(x))  \in \{(\lambda^{(j)}_n, \phi^{(j)}_n(x)) \mid j\in \{1,2,3,4\}, n \in \dZ_{\ge 0}\},
\end{eqnarray}
where $\lambda_n^{(j)}$ and $\phi_n^{(j)}=\xi (x)P_n^{(j)}(x)$ are given as follows. 
\begin{align}
\begin{split}
&(\lambda^{(j)}_n,\xi^{(j)}(x),P_n^{(j)}(x))\\
&=\begin{cases}
(-n,1,K_n(x;p,N)) & (j=1)\\
(-N-n-1,(x-N)_{N+1},K_n(x-N-1;p,-N-2)) & (j=2)\\
(-N+n,(p-1)^xp^{-x},K_n(x;1-p,N)) & (j=3)\\
(n+1, (p-1)^{x}p^{-x} (x-N)_{N+1},K_n(x-N-1;1-p,-N-2)) & (j=4)
\end{cases}
\end{split}
\end{align}
Here, the possible range of $\lambda$ is that of the integers and 
for each integer $\lambda$, depending on the subset to which $n$ belongs, two cases are related: 
\begin{align}
\lambda = - n =
\left\{
\begin{array}{lc}
 \lambda^{(1)}_{n}   = \lambda^{(2)}_{n-N-1} & (N<n )\\
 \lambda^{(1)}_{n}   = \lambda^{(3)}_{N-n}   & (0\leq n \leq N)\\
 \lambda^{(3)}_{N-n} = \lambda^{(4)}_{-n-1}  & (n < 0)\\
\end{array}
\right..
\end{align}
The eigenvalues $\lambda_n^{(1)}$  and $\lambda_n^{(2)}$ are the mirror transformations of $\lambda_n^{(4)}$ and $\lambda_n^{(3)}$ with respect to $1/2$ and $-N+1/2$, respectively.
In the case of Meixner polynomials, this doubling does not occur in general because the constant corresponding to $N$ is real.
It is because of the symmetries of the Krawtchouk polynomials that these situations are possible.
Using the Diophantine property \eqref{K:factorprop}, one finds the simple expressions for the eigenfunctions:
\begin{align}
 \phi_n^{(j)}(x)
 =
 \begin{cases} K_n(x;p,N) & (j=1),\\
 (x-N)_{N+1}K_n(x-N-1;p,-N-2)=K_{n+N+1}(x;p,N) & (j=2)\\
 p^{-x}(p-1)^x K_n(x;1-p,N) & (j=3)\\
 (x-N)_{N+1} (1-p^{-1})^x K_n(x;1-p,N)=(1-p^{-1})^x K_{n+N+1}(x;1-p,N) & (j=4)
 \end{cases}.
\label{eigen:phi}
\end{align}
From the expressions, one finds that
\begin{align}
 &\phi_n^{(2)}(x)=\phi_{n+N+1}^{(1)}(x), 
\end{align}
and also from \eqref{K:dual} that 
\begin{align}
\begin{split}
 &\phi^{(3)}_{n}(x)= p^{-x}(p-1)^x(-1)^n\phi_{n}^{(1)}(N-x), \\
 &\phi^{(4)}_{n}(x)=\phi_{n+N+1}^{(3)}(x)= p^{-x}(p-1)^x(-1)^{n+N+1}\phi_{n+N+1}^{(1)}(N-x). 
 \end{split}
\end{align}
%

\section{Exceptional Krawtchouk operator}\label{sec3}
In this section, we introduce the $X$-Krawtchouk operator which is derived as follows through a Darboux transformation.
Let $(\mu,\chi(x))$ be an eigen-pair of the Krawtchouk operator $\cL$ such that $\cL \chi(x) = \mu \chi(x)$.
Then, the Krawtchouk operator $\cL$ can be factored as
\begin{align}
 \cL = \cB \circ \cF + \mu,
\end{align} 
where 
\begin{align}\label{eK:forward_backward}
\begin{split}
 &\cF = (\eta(x))^{-1}\left(\chi(x) T - \chi(x+1) I\right), \\ 
 &\cB = {(\chi(x))^{-1}}\left(p(N-x) I- x(1-p) T^{-1}\right){\eta(x)}, 
 \end{split}
\end{align}
with an arbitrary decoupling factor $\eta(x)$.
We define the new operator $\hat{\cL}$ by
\begin{equation}
\hat{\cL}= \cF \circ \cB.
\end{equation}
This $\hat{\cL}$ is a second order difference operator which is different from the Krawtchouk operator $\cL$. 
We will call $\hat{\cL }$ the exceptional Krawtchouk ($X$-Krawtchouk) operator. 
It should be remarked here that the eigen-pairs of the $X$-Krawtchouk operator $\hat{\cL}$ are given in terms of the eigen-pairs of the Krawtchouk operator such that $\cL \phi = \lambda \phi $:
\begin{equation}
\hat{\cL}\hat{\phi }=\hat{\lambda }\hat{\phi}
\end{equation} 
with
\begin{equation}
\hat{\phi }= \cF \phi ,\quad \hat{\lambda }=\lambda -\mu .
\end{equation}
The eigenfunction $\chi(x)$ plays the role of the seed function of the Darboux transformation. 
Here and hereafter we shall choose $\chi (x)=\phi^{(j)}_d(x)= \xi^{(j)}(x) \,P_d^{(j)}(x)$ (and $\mu =\lambda_d^{(j)}$) with $\phi^{(j)}_d(x)$ and $\xi^{(j)}(x)$ as in the previous section. The corresponding operators \eqref{eK:forward_backward} are explicitly given by
\begin{align}
\begin{split}
 &\cF^{(j,d)} = (\eta(x))^{-1}\left(\phi_d^{(j)}(x) T - \phi_d^{(j)}(x+1)I\right),\\
 &\cB^{(j,d)} = {(\phi_d^{(j)}(x))^{-1}}\left({p(N-x)}I- {x(1-p)} T^{-1}\right) {\eta(x)},
 \end{split}
\end{align} 
where $\eta(x)$ will be determined in the next section.

We write the exceptional Krawtchouk operator by 
\begin{align}\label{xoperator}
 \cL^{(j,d)} = \cF^{(j,d)} \circ \cB^{(j,d)} + \lambda_d^{(j)} I
\end{align}
for $j \in \{1,2,3,4\}$ and $n,d \in \dZ_{\ge 0}$.

From the discussion above, we find that the solutions to the eigenvalue problem
\begin{align}
\label{Ljd:eigen}
 \cL^{(j,d)} \left[\psi_{\ell,n}^{(j,d)}(x)\right] = \lambda_n^{(\ell)} \psi_{\ell,n}^{(j,d)}(x).
\end{align} 
are formally given by
\begin{align}
 \psi_{\ell,n}^{(j,d)}(x) = \cF^{(j,d)} [\phi_n^{(\ell)} (x)],
\end{align}
if $\cF^{(j,d)} [\phi_n^{(\ell)} (x)] \ne 0$ and otherwise (if $\cF^{(j,d)} [\phi_n^{(\ell)} (x)] = 0$), $\psi_{\ell,n}^{(j,d)}(x)$ is taken from $\mbox{Ker}(\cF^{(j,d)} \circ\cB^{(j,d)}) =\{f(x) \mid \cF^{(j,d)} \circ\cB^{(j,d)}[f(x)]=0\}.
$

\section{Exceptional Krawtchouk polynomials}\label{sec4}
In the previous section, we discussed the four types of seed functions for the Darboux transformation.
In order to construct the exceptional Krawtchouk polynomials $\hat K_n^{(j,d)}(x)$, we have to consider the Darboux transformation of the Krawtchouk polynomials, that is,  $\cF^{(j,d)}[\phi_n^{(1)}(x)]=\cF^{(j,d)}[K_n(x)]$. If $\cF^{(j,d)}[K_n(x)]\ne 0$, it is given as follows in terms of the second-order Casorati determinant:
\begin{align} \label{Xkraw-Darboux}
\begin{split}
\hat K_n^{(j,d)}(x)&=\hat K_n^{(j,d)}(x;p,N)=\dfrac{\cF^{(j,d)}[K_n(x)]}{\nu_n}\\
&=
\dfrac{1}{\nu_n\eta(x)}
\left|\begin{array}{ll}
\phi_d^{(j)}(x) & K_n(x)\\
\phi_d^{(j)}(x+1) & K_n(x+1)\\
\end{array}\right|,
\end{split}
\end{align}
where $\nu_n$ and $\eta(x)$ are chosen so that the eigenfunctions are monic polynomials by removing the common factors for any $n \in \mathbb{Z}_{\ge 0}$.
It is straightforward to see from (\ref{eigen:phi}) that  
\begin{align}
\begin{split}
& \hat K_n^{(1,d+N+1)}(x) = \hat K_n^{(2,d)}(x),\\ 
& \hat K_n^{(3,d+N+1)}(x) = \hat K_n^{(4,d)}(x).
\end{split}
\end{align}
Thus, in the rest of this paper, we shall restrict $d$ to be in the range $0 \le d \le N$ when $j=1$ or $3$.
For each of the cases $j=1$ to $4$, one can verify that 
\begin{align}
\begin{split}
& \left|\begin{array}{ll}
K_{d}(x) & K_n(x)\\
K_{d}(x+1) & K_n(x+1)\\
\end{array}\right| = (n-d) x^{n+d-1} + \cdots,\\
& \left|\begin{array}{ll}
\phi^{(2)}_d(x) & K_n(x)\\
\phi^{(2)}_d(x+1) & K_n(x+1)\\
\end{array}\right| = (n-d-N-1) (x-N+1)_{N}(x^{n+d} + \cdots),\\
 & \left|\begin{array}{ll}
\phi^{(3)}_d(x) & K_n(x)\\
\phi^{(3)}_d(x+1) & K_n(x+1)\\
\end{array}\right| 
= \dfrac{(p-1)^{x}}{p^{x+1}} (x^{n+d}+\cdots),
\\
 & \left|\begin{array}{ll}
\phi^{(4)}_d(x) & K_n(x)\\
\phi^{(4)}_d(x+1) & K_n(x+1)\\
\end{array}\right| 
 =  \dfrac{(p-1)^x}{p^{x+1}} (x-N+1)_N (x^{n+d+1}+\cdots),
\end{split}
\end{align}
which results in  
\begin{align}
\begin{split}
&\nu_n = \nu_n^{(j,d)} = 
\begin{cases}
  d-n & (j=1)\\
  d-n+N+1 & (j=2)\\
  1 & (j=3,4)\\
 \end{cases},\\
&\eta(x)=\eta^{(j)}(x)=
\begin{cases}
 -1 =- \xi^{(1)}(x) & (j=1)\\
 -(x-N+1)_N = (N-x)^{-1}\xi^{(2)}(x)& (j=2)\\
 p^{-x-1}(p-1)^x =p^{-1}\xi^{(3)}(x)& (j=3)\\
p^{-x-1}(p-1)^x (x-N+1)_N = p^{-1}(x-N)^{-1}\xi^{(4)}(x)  & (j=4)
 \end{cases}.
 \end{split}
\end{align}
In these cases, the degree of $\hat{K}_n^{(j,d)}(x)$ is given by
\begin{align}
 \deg [\hat{K}_n^{(j,d)}(x)] = 
\left\{\begin{array}{ll}
n+d-1 &  (j=1)\\ 
n+d &  (j =2 \mbox{ or }3 )\\ 
n+d+1 &  (j=4)\\ 
\end{array}\right..
\end{align}
Note that the cases where $\nu_n^{(1,d)}=0$ and $\nu^{(2,d)}_n=0$ need to be discussed separately.
\par
For cases 3 and 4, observe that the relations below are satisfied on the grid points $x \in \{0,1,2,\ldots,N-1\}$, where the orthogonality holds:
\begin{subequations}
\begin{align}
&\hat K_{n}^{(3,d)}(x) = \gamma_{n,d,N} \hat K_{n}^{(1,N-d)}(x), \label{Diophan:1-3a}\\
&\hat K_{n}^{(4,d)}(x) = \gamma_{n,d,N}^* p^{-x} (p-1)^x\hat K_{N-n}^{(2,d)}(N-x-1),\label{Diophan:2-4a}
\end{align}
\end{subequations}
with $\gamma_{n,d,N}= \frac{ p^{d-N}(p-1)^d (-N)_d}{(-1)^d(-N)_{N-d}}(n+d-N)$ and $\gamma_{n,d,N}^*= \frac{p^n (p-1)^{n-N+1}(-N)_n}{(-1)^{d+1}(-N)_{N-n}}(n+d+1)$.
Therefore, cases 3 and 4 can be obtained from $\hat K_n^{(1,d)}(x)$ and $\hat K_n^{(2,d)}(x)$.\par

From these results, we have obtained $\hat{K}^{(j,d)}_n(x)$ as the $X$-Krawtchouk polynomials which are polynomial eigenfunctions of the $X$-Krawtchouk operator defined from \eqref{xoperator}. 
Since the $\hat{K}_{n}^{(j,d)}(x)$ are defined by considering $\cF^{(j,d)}[\phi^{(1)}_n(x)]$, the other $X$-Krawtchouk polynomials are expected to be obtained from $\cF^{(j,d)}[\phi^{(\ell)}_n(x)]~ (\ell=2,3,4)$. However, if $\cF^{(j,d)}[\phi^{(\ell)}_n(x)]\ne 0$, these functions take zero values at $x=1,2,\ldots ,N$ when $\ell=2$ and do not give polynomial eigenfunctions in $x$ when $\ell=3,4$. 
Therefore, in the case of $\cF^{(j,d)}[\phi^{(\ell)}_n(x)]\ne 0$, $\hat{K}^{(j,d)}_n(x)$ are the only $X$-Krawtchouk polynomials.
It is straightforward to see that the $X$-Krawtchouk polynomials $\hat{K}^{(j,d)}_n(x)$ are polynomial eigenfunctions of the $X$-Krawtchouk operator:
\begin{equation}
\cF^{(j,d)}\circ \cB^{(j,d)}\hat{K}^{(j,d)}_n(x)= (-n-\lambda_d^{(j)})\hat{K}^{(j,d)}_n(x).
\end{equation}
One can also find other polynomial eigenfunctions to this problem when $\cF^{(j,d)}[\phi^{(\ell)}_n(x)]= 0$. We have then the following cases:
\begin{align}
\begin{split}
\{ (j,d,\ell,n) \mid \cF^{(j,d)}[\phi^{(\ell)}_n(x)]= 0\}
=\{ (j,d,j,d)~|~1\le j\le 4,d\ge 0\}\\
\qquad \cup \{ (2,d,1,d+N+1) \mid d\ge 0\}\\
\qquad \cup \{ (4,d,3,d+N+1) \mid d\ge 0\}.
\end{split}
\end{align}
When $(\ell,n)=(j,d)$, we only have to consider $\mbox{Ker}(\cB^{(j,d)})$, i.e. $\cB^{(j,d)} \psi(x)=0$, from where we obtain 
\begin{align}
 \psi_{j,d}^{(j,d)}(x) =
\left\{\begin{array}{cc}
(-1)^{x+N}(1-p)^x p^{-x}(-x)_{N}  &  (j=1)\\
 (-1)^x (1-p)^x p^{-x}  &  (j=2)\\
(-1)^N (-x)_{N} &  (j=3)\\
 1  &  (j=4)
\end{array}\right..
\label{Ker_B}
\end{align}
We can determine that $\psi_{4,d}^{(4,d)}(x)=1$ belongs to the class of $X$-Krawtchouk polynomials $\{\hat K_{n}^{(4,d)}(x)\}$ and we then introduce
\begin{align}
\hat K_{-d-1}^{(4,d)}(x)=\psi_{4,d}^{(4,d)}=1,
\end{align}
where the subscript of $\hat K_{-d-1}^{(4,d)}(x)$ is chosen to keep $\deg \hat K_n^{(4,d)}= n+d+1$. 
In the case of $\psi_{3,d}^{(3,d)}(x)$, it should be remarked that $\psi_{3,d}^{(3,d)}(x)=\hat{K}_{N-d}^{(j,d)}(x)\propto \psi_{1,N-d}^{(3,d)}(x)$.
In the other cases, we should directly solve $\cF^{(j,d)}\circ \cB^{(j,d)}\psi =0$ to find
\begin{align}\label{x-kraw2:N+d+1}
\begin{split}
 \hat{K}_{N+d+1}^{(2,d)}(x) &=\psi_{1,N+d+1}^{(2,d)}= \hat{K}_{N+d+1}^{(2,d)}(x;p,N)\\
&=\sum_{0 \leq k,j \leq d}
\sum_{\ell=0}^{N+k+j+1}  
(-1)^{k+j+\ell} \dfrac{ (-d)_j(-d)_k \,(1-p)^{d-k}\,(-p)^{N+d+1+k-\ell}}{\, j! \,k!\,\ell! }\\
 &\times  (N+j+k+1)!\,(-N-d-1)_{d-j} (-N-d-1)_{d-k}{(-x-k-1)_{\ell}}\\
&=\sum_{0 \le k,j \le d} \frac{(-d)_j(-d)_k(p-1)^{d-k}p^{d-j}}{j!k!}
(-N-d-1)_{d-j}(-N-d-1)_{d-k} \\
&\times K_{N+k+j+1}(x+k+1;p,N+k+j+1),
 \end{split}
\end{align}
so that $\cB^{(2,d)}[\hat K^{(2,d)}_{N+d+1}(x)] = K_{N+d+1}(x) \in \mbox{Ker}(\cF^{(2,d)})$.
In a similar way, the functions $\psi_{3,N+d+1}^{(4,d)}$ can also be calculated although they are not polynomials in $x$.
It should be remarked here that $\hat{K}_{N+d+1}^{(2,d)}(x)$ can formally be obtained as follows:
\begin{equation}
\hat{K}_{N+d+1}^{(2,d)}(x)=\lim_{\varepsilon \to 0} \hat{K}_{N+d+1}^{(2,d)}(x;p,N+\varepsilon) .
\end{equation}
In addition, the Krawtchouk polynomials can be recovered by acting with $\cB^{(j,d)}$ on the $X$-Krawtchouk polynomials $\hat K_n^{(j,d)}(x)$ so as to find
\begin{align}
 \cB^{(j,d)}[\hat K_n^{(j,d)}(x)]
 = \tilde \nu_n^{(j,d)}K_n(x),
\label{Bop:XKtoK}
\end{align}
with $\tilde \nu_n^{(j,d)} = \delta_{1,j} + \delta_{2,j} + (\delta_{3,j}+\delta_{4,j})(\lambda_n - \lambda_d^{(j)})$,
except in the cases where $\tilde \nu_n^{(j,d)}=0$, i.e.
\begin{align}
 \cB^{(3,d)}[\hat K_{N-d}^{(3,d)}(x)] =  \cB^{(4,d)}[\hat K_{-d-1}^{(4,d)}(x)] = 0.
\label{Bop:XKtoZero}
\end{align}

\section{Properties of the exceptional Krawtchouk polynomials}\label{sec5}
We have introduced all classes of $X$-Krawtchouk polynomials $\{\hat{K}_{n}^{(j,d)}\}$ by means of a Darboux transformation.
These $\hat{K}_{n}^{(j,d)}$ are sometimes called the $X$-Krawtchouk polynomials of type $j$. 
In this section, we shall examine their properties.

\subsection{Diophantine property/factorization}
Analogues of the Diophantine property of the ordinary Krawtchouk polynomials \eqref{K:factorprop} can be found for the $X$-Krawtchouk polynomials and will be called the Diophantine properties of the $X$-Krawtchouk polynomials. They are listed below.

When $n > N$, we have
\begin{subequations} \label{Diophan:first}
\begin{align}
& \hat{K}_n^{(1,d)}(x) = (x-N+1)_{N} \hat{K}_{n-N-1}^{(2,d)}(x-N-1;p,-N-2) \quad (n \ne d), \label{Diophan:1-2}\\
& \hat{K}_n^{(2,d)}(x) = (x-N)_{N+2} \hat{K}_{n-N-1}^{(1,d)}(x-N-1;p,-N-2) \quad (n \ne N+d+1),\\
& \hat{K}_n^{(3,d)}(x) = (x-N+1)_{N} \hat{K}_{n-N-1}^{(4,d)}(x-N-1;p,-N-2),\label{Diophan:3-4}\\
& \hat{K}_n^{(4,d)}(x) = (x-N)_{N+2} \hat{K}_{n-N-1}^{(3,d)}(x-N-1;p,-N-2), \label{Diophan:4-3}
\end{align}
\end{subequations}
and
\begin{align}
 \hat{K}_{N-d}^{(3,d)}(x) = (x-N+1)_{N}.
\end{align}
When $d> N $, we have 
\begin{align}
\begin{split}
& \hat{K}_n^{(1,d)}(x) = (x-N+1)_{N} \,\hat{K}_{n}^{(2,d-N-1)}(x), \\
& \hat{K}_n^{(3,d)}(x) = (x-N+1)_{N} \,\hat{K}_{n}^{(4,d-N-1)}(x).
\end{split}
\end{align}
When $n,d > N$, we have  
\begin{align}
\begin{split}
 \hat{K}_n^{(1,d)}(x) 
&= (-x)_{N} \,(-1-x)_{N+2} \,\hat{K}_{n-N-1}^{(1,d-N-1)}(x-N-1;p,-N-2) \quad (n \ne d), \\
\hat{K}_n^{(3,d)}(x) 
&= (-x)_{N} \,(-1-x)_{N+2} \,\hat{K}_{n-N-1}^{(3,d-N-1)}(x-N-1;p,-N-2),
\end{split}
\end{align}
where we have used that for $m\ge 0$ and $d>N$,
\begin{align}
\begin{split}\label{Diophan:last}
 \hat{K}_{m}^{(2,d)}(x;p,-N-2) &=(x+1)_{N+2} \,\hat{K}_{m}^{(1,d-N-1)}(x;p,-N-2),\\
 \hat{K}_{m}^{(4,d)}(x;p,-N-2) &=(x+1)_{N+2} \,\hat{K}_{m}^{(3,d-N-1)}(x;p,-N-2).
 \end{split}
\end{align}

\subsection{Orthogonality}
\subsubsection{Type 1 and Type 3}
From the Diophantine property \eqref{Diophan:1-2}, one finds that 
for $x \in \{0,1,2,\ldots,N-1\}$ $\hat{K}_{n}^{(1,d)}(x)$ can take a non-zero value only when $n,d \in \{0,1,2,\ldots,N\}$.
Suppose that $d \in \{0,1,2,\ldots,N\}$ and let $X_{(1,d)} = \{0,1,\ldots,d-1,d+1,\ldots,N\}$.
In this case, the so-called state-deletion occurs.
The set $\{\hat{K}_n^{(1,d)}(x) \mid n \in X_{(1,d)} \}$ has the discrete orthogonality:
\begin{equation}
 \sum_{x =0}^{N-1}  \hat w^{(1,d)}(x) \hat{K}_n^{(1,d)}(x) \hat{K}_m^{(1,d)}(x) = \hat h_n^{(1,d)} \delta_{n,m}
\end{equation}
for $n,m \in X_{(1,d)}$, where
\begin{align}
\begin{split}
  \hat w^{(1,d)}(x;p,N) &= \dfrac{N-x}{N(1-p)} \dfrac{w(x;p,N)}{P_d^{(1)}(x) P_d^{(1)}(x+1)}
=
\dfrac{w(x;p,N-1)}{P_d^{(1)}(x) P_d^{(1)}(x+1)}, \\
\hat h^{(1,d)}_n &= \frac{h_n}{(\lambda_d^{(1)}-\lambda_n)Np(1-p)}.
\end{split}
\end{align}

Only when $d=0,N$, does the weight function $\hat w^{(1,d)}(x;p,N)$ not change sign on $x \in \{ 0,1,\ldots ,N-1\} $. This implies that the exceptional Krawtchouk polynomials of type 1 $\{ \hat{K}_{n}^{(1,d)}\}_{n\in X_{(1,d)}}$ are positive definite when $d=0,N$.

In the same fashion, we observe from \eqref{Diophan:3-4} that the $X$-Krawtchouk polynomials of type 3 $\hat{K}_{n}^{(3,d)}$ on $x\in \{ 0,1,\ldots ,N-1\}$ take a non-zero value when $n\in X_{(3,d)} = \{0,1,\ldots,N-d-1,N-d+1,\ldots,N\}$. Furthermore, from \eqref{Diophan:1-3a} we see that the orthogonality for the set $\{ X_n^{(3,d)}\}_{n\in X_{(3,d)}}$ is essentially the same as that for the $X$-Krawtchouk polynomials of type 1.

\subsubsection{Type 2 and type 4}\label{sec:orthogonality_24}
In this case, a state-addition occurs.
Let $X_{(2,d)} = \{0,1,\ldots,N, N+d+1\}$.
The set $\{\hat{K}_n^{(2,d)}(x) \mid n \in X_{(2,d)} \}$ has the discrete orthogonality:
\begin{equation}
 \sum_{x=-1}^{N}  \hat w^{(2,d)}(x) \hat{K}_n^{(2,d)}(x) \hat{K}_m^{(2,d)}(x) = \hat h_n^{(2,d)} \delta_{n,m}
\end{equation}
for $n,m \in X_{(2,d)}$, where
\begin{align}
\begin{split}
  \hat w^{(2,d)}(x) &= \dfrac{p(N+1)}{(x+1)} \dfrac{w(x;p,N)}{P_d^{(2)}(x)P_d^{(2)}(x+1)} 
= \dfrac{w(x+1;p,N+1) }{P_d^{(2)}(x)P_d^{(2)}(x+1)} , \\
  \hat h^{(2,d)}_n &= \frac{(N+1)h_n}{\lambda_n-\lambda_d^{(2)}}
= (-1)^{d} \,d!\, (N+1)! \,(N+d+1)!\, (1-p)^{N+d+1} p^{N+d+1}.
\end{split}
\end{align}
When $d$ is an even integer, one finds that the weight function $\hat w^{(2,d)}(x)$ is positive on $x\in X_{(2,d)}$.


As in the cases of type $1$ and of type $3$ $X$-Krawtchouk polynomials, we find from \eqref{Diophan:4-3} that the $X$-Krawtchouk polynomials of type 4 $\hat{K}_{n}^{(4,d)}$ on $x\in \{ -1,0,1,\ldots ,N\} $ take a non-zero value when $n\in X_{(4,d)} = \{-d-1,0,1,\ldots,N\}$. Furthermore, from \eqref{Diophan:2-4a} we see that the orthogonality for $\{ X_n^{(4,d)}\}_{n\in X_{(4,d)}}$ is essentially the same as that of the $X$-Krawtchouk polynomials of type 2.


\subsection{Recurrence relations}
The ordinary orthogonal polynomials are known to satisfy the three term recurrence relations. However XOPs satisfy the recurrence relations with more terms \cite{miki2015new}.
In order to derive the recurrence formula of the $X$-Krawtchouk polynomials, we first give the resultant of the Krawtchouk polynomials \cite{ismail2005discriminant}.
\begin{lemma}
Let us denote the resultant of $f(x),g(x)\in \mathbb{R}[x]$ by $\mathrm{Res}(f(x),g(x))$. 
For $n=1,2,\ldots$ and $a\in \mathbb{R}$, the following relation holds:
\begin{align}\label{resultant_12}
\begin{split}
& \mathrm{Res}\left(K_n(x;p,a),K_n(x+1;p,a)\right)\\
& \qquad =  (-n)^n \mathrm{Res}\left(K_{n-1}(x;p,a-1),K_n(x+1;p,a)\right), \\
& \mathrm{Res}\left(K_n(x;p,a-1),K_{n+1}(x+1;p,a)\right)\\
& \qquad = \left((n-a)np(1-p)\right)^n \mathrm{Res}\left(K_{n-1}(x;p,a-1),K_n(x+1;p,a)\right)
\end{split}
\end{align}
and
\begin{align}
\begin{split}
& \mathrm{Res}\left(K_n(x;p,a),K_n(x+1;p,a)\right)\\
& \qquad =  n^n \prod_{k=1}^{n-1} k^k (k-a)^k p^{n(n-1)/2}(1-p)^{n(n-1)/2}.
\end{split}
\label{resultant_K_K}
\end{align}
\end{lemma}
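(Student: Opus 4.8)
The plan is to prove the two resultant identities \eqref{resultant_12} first and then deduce the closed form \eqref{resultant_K_K} by iterating them. Throughout I will use the standard determinantal/product formula for the resultant: if $f$ has leading coefficient $a_m$ and roots $\alpha_1,\dots,\alpha_m$, and $g$ has leading coefficient $b_k$, then $\mathrm{Res}(f,g) = a_m^k \prod_i g(\alpha_i) = (-1)^{mk} b_k^m \prod_j f(\beta_j)$, together with the multiplicativity $\mathrm{Res}(f,g_1 g_2)=\mathrm{Res}(f,g_1)\mathrm{Res}(f,g_2)$ and the behaviour under the shift $x\mapsto x+1$ (which leaves the resultant of two polynomials unchanged, since it only translates all the roots of both factors simultaneously).

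For the first identity in \eqref{resultant_12}, I would start from the three-term recurrence. The key observation is that at a common root $x_0$ of $K_n(x;p,a)$ and $K_n(x+1;p,a)$ we need a relation expressing $K_n(x;p,a)$ through $K_{n-1}(x;p,a-1)$; the natural candidate is the contiguous relation $K_n(x+1;p,N) = K_n(x;p,N)+ n K_{n-1}(x;p,N-1)$ from \eqref{K:rec_variant} (with $N$ replaced by the generic parameter $a$). Concretely, write $K_n$ as monic of degree $n$ with roots $\alpha_1,\dots,\alpha_n$; then $\mathrm{Res}(K_n(x;p,a),K_n(x+1;p,a)) = \prod_{i} K_n(\alpha_i+1;p,a) = \prod_i n K_{n-1}(\alpha_i;p,a-1) = n^n \prod_i K_{n-1}(\alpha_i;p,a-1) = n^n \,\mathrm{Res}(K_n(x;p,a),K_{n-1}(x;p,a-1))$, using the contiguous relation and monicity. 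Then I would use the symmetry of the resultant up to sign, $\mathrm{Res}(f,g)=(-1)^{\deg f\,\deg g}\mathrm{Res}(g,f)$, and a shift $x\mapsto x+1$ inside to rewrite $\mathrm{Res}(K_n(x;p,a),K_{n-1}(x;p,a-1))$ as $\pm\,\mathrm{Res}(K_{n-1}(x;p,a-1),K_n(x+1;p,a))$; tracking the sign $(-1)^{n(n-1)}=1$ and the fact that the monic resultant is the one appearing, this yields the stated $(-n)^n$ after absorbing a sign coming from the leading coefficient normalisation in the shifted argument — this bookkeeping is the first place where care is needed. For the second identity in \eqref{resultant_12} I would play the same game in the other direction: at a common root of $K_n(x;p,a-1)$ and $K_{n+1}(x+1;p,a)$, use the variant recurrence \eqref{K:rec_variant}, $(x-N)K_n(x+1;p,N) = K_{n+1}(x;p,N)+\cdots$, specialised appropriately, or rather evaluate $K_{n+1}(x+1;p,a)$ at the roots of $K_n(x;p,a-1)$ via a contiguous relation relating $K_{n+1}(\cdot;p,a)$ to $K_n(\cdot;p,a-1)$, producing the factor $((n-a)np(1-p))^n$ from the recurrence coefficient in \eqref{K:TRR} evaluated through the relevant product.

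Having the two recursions in \eqref{resultant_12}, I would derive \eqref{resultant_K_K} by induction on $n$. Chaining the first identity of \eqref{resultant_12} with the second (replacing $n$ by $n-1$ and $a$ by $a$ in the second, so that its right-hand side matches the common building block $\mathrm{Res}(K_{n-1}(x;p,a-1),K_n(x+1;p,a))$), one gets a recursion of the shape
\begin{align}
\mathrm{Res}\!\left(K_n(x;p,a),K_n(x+1;p,a)\right) = (-n)^n\Big/\!\left((n-1-a)(n-1)p(1-p)\right)^{n-1}\;(\cdots),
\end{align}
relating the order-$n$ resultant to the order-$(n-1)$ one; more precisely the clean statement is that $\mathrm{Res}(K_{n-1}(x;p,a-1),K_n(x+1;p,a))$ equals $\mathrm{Res}(K_{n-1}(x;p,a-1),K_{n-1}(x+1;p,a-1))$ up to the explicit factor $((n-1-a)(n-1)p(1-p))^{n-1}$, and the latter is the order-$(n-1)$ instance at parameter $a-1$. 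Unwinding this telescoping recursion with base case $n=1$ (where $K_1(x;p,a)=x-ap$, $K_1(x+1;p,a)=x+1-ap$, so the resultant is $1$, matching the empty product) produces exactly the product $n^n\prod_{k=1}^{n-1}k^k(k-a)^k p^{n(n-1)/2}(1-p)^{n(n-1)/2}$; I would verify the exponent count by noting that the accumulated powers of $p(1-p)$ sum to $\sum_{k=1}^{n-1}k = n(n-1)/2$, and the $k^k$ and $(k-a)^k$ factors come from the successive recurrence coefficients.

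The main obstacle I anticipate is the sign and leading-coefficient bookkeeping in \eqref{resultant_12}: the resultant is sensitive to which polynomial is listed first, to the degree of each factor, and — crucially — to leading coefficients, whereas the Krawtchouk polynomials here are monic but the shift $x\mapsto x+1$ and the parameter change $a\mapsto a-1$ must not alter leading coefficients (they do not, since monic degree-$n$ stays monic degree-$n$). The appearance of $(-n)^n$ rather than $n^n$ in the first identity signals exactly such a sign, presumably $(-1)^n$ from one application of $\mathrm{Res}(f,g)=(-1)^{\deg f\deg g}\mathrm{Res}(g,f)$ with one of the degrees odd; getting this consistent across the induction is the delicate part. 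A secondary technical point is choosing the right contiguous/recurrence relation among those in \eqref{K:TRR} and \eqref{K:rec_variant} so that the product $\prod_i(\text{factor at root }\alpha_i)$ collapses to a clean power times a lower resultant rather than an unmanageable symmetric function of the roots — this works precisely because the relevant contiguous relations are \emph{linear} in the shifted/contiguous polynomials with constant (root-independent) coefficients, so $\prod_i(c\cdot K_{n-1}(\alpha_i))=c^n\prod_i K_{n-1}(\alpha_i)=c^n\,\mathrm{Res}(K_n,K_{n-1})$.
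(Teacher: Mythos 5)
Your overall strategy --- derive the two recursions in \eqref{resultant_12} by evaluating one polynomial at the roots of the other via the contiguous relations, then telescope down to $n=1$ --- is essentially the paper's (the paper cites three $\hg{2}{1}$ contiguous relations, the base case, and induction), but there are three concrete gaps in the execution. The most serious is the step where you convert $\mathrm{Res}(K_n(x;p,a),K_{n-1}(x;p,a-1))$ into $\mathrm{Res}(K_{n-1}(x;p,a-1),K_n(x+1;p,a))$ by ``a shift $x\mapsto x+1$ inside'': shifting only \emph{one} argument of a resultant changes it in general, and your justification (that a shift translates the roots of both factors simultaneously) applies only to a simultaneous shift. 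The repair is to use the same contiguous relation once more: at any root $\beta$ of $K_{n-1}(x;p,a-1)$ one has $K_n(\beta+1;p,a)=K_n(\beta;p,a)+nK_{n-1}(\beta;p,a-1)=K_n(\beta;p,a)$, i.e.\ $K_n(x+1;p,a)\equiv K_n(x;p,a)$ modulo $K_{n-1}(x;p,a-1)$, and since both are monic of the same degree the two resultants coincide with no correction factor. Related to this, the extra sign you hope to ``absorb from the leading coefficient normalisation'' does not exist: everything in sight is monic and the swap contributes $(-1)^{n(n-1)}=1$, so the computation yields the factor $n^n$, not $(-n)^n$. That is in fact the value consistent with \eqref{resultant_K_K} and with the base case $\mathrm{Res}(K_0(x;p,a-1),K_1(x+1;p,a))=1$ (check $n=1$ directly), so do not spend effort manufacturing a phantom $(-1)^n$.

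The third gap is in the induction. The second identity of \eqref{resultant_12}, applied with $n\mapsto n-1$, relates $S_n(a):=\mathrm{Res}(K_{n-1}(x;p,a-1),K_n(x+1;p,a))$ to $S_{n-1}(a)$ \emph{at the same value of} $a$, namely $S_n(a)=\left((n-1-a)(n-1)p(1-p)\right)^{n-1}S_{n-1}(a)$; it does not reduce to ``the order-$(n-1)$ instance at parameter $a-1$'' as you claim. Your version would accumulate powers of $(n-1-a)$ rather than the factors $(k-a)^k$ and fails already at $n=3$. The correct chain is: first identity once at the top, $R_n(a)=n^nS_n(a)$, then $S_m(a)=\left((m-a)mp(1-p)\right)^mS_{m-1}(a)$ for $m=n-1,\dots,1$ with $S_1(a)=1$; this gives exactly \eqref{resultant_K_K}. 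Finally, for the second identity your sketch is on the right track but should be made concrete: use $K_{n+1}(x+1;p,a)=K_{n+1}(x;p,a-1)+(n+1)(1-p)K_n(x;p,a-1)$ from \eqref{K:rec_variant} together with the three-term recurrence \eqref{K:TRR} at parameter $a-1$ evaluated at the roots $\gamma_j$ of $K_n(x;p,a-1)$, which gives $K_{n+1}(\gamma_j+1;p,a)=(n-a)np(1-p)\,K_{n-1}(\gamma_j;p,a-1)$, and then the same mod-reduction argument to reinstate the shift in the remaining resultant.
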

\begin{proof}
Recall that $K_n(x;p,a)=(-a)_n(p)^n\hg{2}{1}\argu{\!-n,-x\!}{-a}{\dfrac{1}{p}}$. 
By using the contiguous relations of the hypergeometric functions \cite{bateman1953higher}:
\begin{align}
\begin{split}
& \hg{2}{1}\argu{\!\!-n,1-x\!\!}{-a}{\frac{1}{p}} - \hg{2}{1}\argu{\!\!-n,-x\!\!}{-a}{\frac{1}{p}}= \dfrac{n}{ap}\,\hg{2}{1}\argu{\!1-n,1-x\!}{1-a}{\frac{1}{p}} ,\\
& a p \hg{2}{1}\argu{\!\!-n-1,-x\!\!}{-a}{\frac{1}{p}} +x\hg{2}{1}\argu{\!\!-n,1-x\!\!}{1-a}{\frac{1}{p}} = ap \hg{2}{1}\argu{\!-n,-x\!}{-a}{\frac{1}{p}},\\
& a \hg{2}{1}\argu{\!\!-n,-x\!\!}{-a}{\frac{1}{p}} +(n-a)\hg{2}{1}\argu{\!\!-n,1-x\!\!}{1-a}{\frac{1}{p}} = n\left(1-\dfrac{1}{p}\right) \hg{2}{1}\argu{\!1-n,1-x\!}{1-a}{\frac{1}{p}},
\end{split}
\end{align}
and $\mathrm{Res}(K_1(x;p, a), 1)=1$,
simple calculations show that \eqref{resultant_12} holds and 
\eqref{resultant_K_K} can be obtained by induction.
\end{proof}
\begin{corollary}\label{cor:zero}
$K_n(x;p,a)$ and  $K_n(x+1;p,a)$ have no common zeros if and only if $p \notin \{0,1\}$ and $a \notin \{1,2,\ldots,n-1\}$.
\end{corollary}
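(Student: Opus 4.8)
The plan is to read the corollary off directly from the resultant evaluation \eqref{resultant_K_K}. Recall the standard fact that for a monic polynomial $f$ one has $\mathrm{Res}(f,g)=\prod_{\alpha}g(\alpha)$, the product running over the roots $\alpha$ of $f$ counted with multiplicity. Since $K_n(x;p,a)$ is monic of degree $n$ in $x$ by its very definition, and $K_n(x+1;p,a)$ is likewise monic of degree $n$, the resultant $\mathrm{Res}(K_n(x;p,a),K_n(x+1;p,a))$ vanishes precisely when these two polynomials share a zero over $\dC$. Hence ``$K_n(x;p,a)$ and $K_n(x+1;p,a)$ have no common zeros'' is equivalent to ``the resultant is nonzero'', and it suffices to decide when the right-hand side of \eqref{resultant_K_K} is nonzero.

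Next I would inspect the factors of
\[
\mathrm{Res}(K_n(x;p,a),K_n(x+1;p,a)) = n^n \prod_{k=1}^{n-1}k^k(k-a)^k\, p^{n(n-1)/2}(1-p)^{n(n-1)/2}.
\]
The factors $n^n$ and $\prod_{k=1}^{n-1}k^k$ never vanish; $p^{n(n-1)/2}$ is nonzero iff $p\neq 0$ and $(1-p)^{n(n-1)/2}$ is nonzero iff $p\neq 1$ (for $n\ge 2$; both exponents vanish when $n=1$); and $\prod_{k=1}^{n-1}(k-a)^k$ is nonzero iff $a\neq k$ for every $k\in\{1,2,\ldots,n-1\}$, i.e. iff $a\notin\{1,2,\ldots,n-1\}$. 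Combining, the resultant is nonzero exactly when $p\notin\{0,1\}$ and $a\notin\{1,2,\ldots,n-1\}$, which is the assertion. (The $n=1$ case is degenerate: $K_1(x;p,a)=x-ap$ and $K_1(x+1;p,a)=x+1-ap$ never share a zero, consistent with the empty-product convention giving $\mathrm{Res}=1$; within the standing hypothesis $p\in(0,1)$ this is harmless.)

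There is essentially no obstacle here beyond invoking the resultant-vanishing criterion correctly — the content is all in the Lemma. As a sanity check on the ``only if'' direction one can exhibit the common zeros by hand: when $p=0$ one has $K_n(x;0,a)=(-1)^n(-x)_n=x(x-1)\cdots(x-n+1)$, so $0,1,\ldots,n-2$ are common zeros of $K_n(x;0,a)$ and $K_n(x+1;0,a)$; the case $p=1$ follows from the symmetry \eqref{K:dual}; and if $a=m\in\{1,\ldots,n-1\}$ the factorization property \eqref{K:factorprop} shows $x(x-1)\cdots(x-m)$ divides $K_n(x;p,m)$, producing the common zeros $0,1,\ldots,m-1$. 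Thus the only direction that genuinely requires the resultant is the converse, and the route through \eqref{resultant_K_K} disposes of both directions at once.
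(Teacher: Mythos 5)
Your proof is correct and follows exactly the route the paper intends: the corollary is stated immediately after the Lemma with no separate proof, precisely because it is read off from \eqref{resultant_K_K} via the standard criterion that two polynomials share a root iff their resultant vanishes, which is what you do. Your remark on the degenerate case $n=1$ (where the ``only if'' direction is vacuous since the relevant exponents and the product over $k$ are empty) is a reasonable and correct refinement that the paper leaves implicit.
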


\begin{proposition}\label{prop53}
For $q(x) \in \mathbb{R}[x]$, $\cB^{(j,d)}[q(x)]\in \dR[x]$ if and only if 
\begin{equation}
q(x) \in \mbox{\rm span}\{\hat{K}_n^{(j,d)}(x)\}_{n\in \dZ_{\ge 0}^{(j,d)}},
\end{equation} 
where 
\begin{equation}
\dZ_{\ge 0}^{(j,d)}=\begin{cases}
\dZ_{\ge 0} \backslash \{d\} & (j=1)\\
\dZ_{\ge 0} & (j=2,3)\\
\dZ_{\ge 0} \cup \{-d-1\} & (j=4)
\end{cases}.
\end{equation}
\end{proposition}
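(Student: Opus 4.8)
The idea is to put $\cB^{(j,d)}$ into a rational normal form and then match codimensions in $\mathbb{R}[x]$. Expanding
\[
\cB^{(j,d)}[q](x)=\bigl(\phi_d^{(j)}(x)\bigr)^{-1}\Bigl(p(N-x)\,\eta^{(j)}(x)\,q(x)-x(1-p)\,\eta^{(j)}(x-1)\,q(x-1)\Bigr)
\]
and inserting the explicit forms of $\eta^{(j)}$ and $\phi_d^{(j)}=\xi^{(j)}P_d^{(j)}$ from Section~\ref{sec4}, the exponential factors and the prefactors $(x-N)_{N+1}$ all cancel between numerator and denominator (here one uses \eqref{K:factorprop}), leaving
\[
\cB^{(j,d)}[q](x)=\frac{\Theta_j[q](x)}{D_j(x)},
\]
where $D_j$ is monic of degree $d$ --- namely $D_1=K_d(x;p,N)$, $D_3=K_d(x;1-p,N)$, $D_2=K_d(x-N-1;p,-N-2)$, $D_4=K_d(x-N-1;1-p,-N-2)$ --- and $\Theta_1[q]=p(x-N)q(x)+x(1-p)q(x-1)$, $\Theta_2[q]=pq(x)+(1-p)q(x-1)$, $\Theta_3[q]=(N-x)q(x)+xq(x-1)$, $\Theta_4[q]=q(x-1)-q(x)$. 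Hence $\cB^{(j,d)}[q]\in\mathbb{R}[x]$ if and only if $D_j\mid\Theta_j[q]$, i.e.\ if and only if $q$ lies in the kernel of the linear map $\bar\Theta_j\colon\mathbb{R}[x]\to\mathbb{R}[x]/(D_j)$, $q\mapsto\Theta_j[q]\bmod D_j$.

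The ``if'' direction is then immediate from \eqref{Bop:XKtoK}--\eqref{Bop:XKtoZero}: $\cB^{(j,d)}$ sends each generator $\hat{K}_n^{(j,d)}$, $n\in\dZ_{\ge 0}^{(j,d)}$, to the polynomial $\tilde\nu_n^{(j,d)}K_n$ (or to $0$, when $n=N-d$ and $j=3$, or $n=-d-1$ and $j=4$), so by linearity $\mathrm{span}\{\hat{K}_n^{(j,d)}\}_{n\in\dZ_{\ge 0}^{(j,d)}}\subseteq\ker\bar\Theta_j$. For the converse I would show these two spaces have the same (finite) codimension in $\mathbb{R}[x]$. The degrees $\deg\hat{K}_n^{(j,d)}$, $n\in\dZ_{\ge 0}^{(j,d)}$, are pairwise distinct and fill $\dZ_{\ge d-1}\setminus\{2d-1\}$ for $j=1$, fill $\dZ_{\ge d}$ for $j=2,3$ (the separately constructed $\hat{K}_{N+d+1}^{(2,d)}$ and $\hat{K}_{N-d}^{(3,d)}$ have degrees $N+2d+1$ and $N$, already in this set), and fill $\{0\}\cup\dZ_{\ge d+1}$ for $j=4$; in each case exactly $d$ nonnegative integers are missing, so $\mathrm{span}\{\hat{K}_n^{(j,d)}\}$ has codimension $d$. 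On the other hand $\mathrm{codim}\,\ker\bar\Theta_j=\dim\mathrm{Im}\,\bar\Theta_j\le\dim\bigl(\mathbb{R}[x]/(D_j)\bigr)=d$, and equality holds because $\bar\Theta_j$ is onto: when $j=2$, $\Theta_2=pI+(1-p)T^{-1}$ preserves leading terms and hence is a bijection of $\mathbb{R}[x]$; when $j=4$, $\Theta_4=T^{-1}-I$ is onto $\mathbb{R}[x]$ by discrete antidifferentiation; and when $j=1,3$ (under the standing convention $0\le d\le N$), $D_j$ is an ordinary Krawtchouk polynomial of degree $d$ with $d$ simple zeros $r_1,\dots,r_d\in(0,N)$, no two of which differ by $1$ (Corollary~\ref{cor:zero}, since $N\notin\{1,\dots,d-1\}$), so the $2d$ numbers $r_1,\dots,r_d,r_1-1,\dots,r_d-1$ are pairwise distinct and --- the coefficients of $q(r_i)$ and of $q(r_i-1)$ in $\Theta_j[q](r_i)$ being nonzero --- Lagrange interpolation makes the $d$ conditions $\Theta_j[q](r_i)=0$ independent. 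Thus $\mathrm{span}\{\hat{K}_n^{(j,d)}\}$ and $\ker\bar\Theta_j$ both have codimension $d$ while one contains the other, so they coincide.

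I expect the main obstacle to be the normal-form step: one must verify, case by case, that the $\xi^{(j)}$- and $(x-N)_{N+1}$-factors cancel exactly, so that $D_j$ has degree precisely $d$ --- this is what makes the two codimension counts agree. The only other non-routine point is the independence of the $d$ linear conditions when $j=1,3$, and that is exactly where Corollary~\ref{cor:zero} enters.
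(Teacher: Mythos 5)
Your argument is correct, but it reaches the conclusion by a genuinely different route than the paper. The paper treats $\cB^{(j,d)}[q]=r$ as a first-order difference equation for $q$: it expands $r$ in ordinary Krawtchouk polynomials, builds a particular solution inside the span via \eqref{Bop:XKtoK}, and adds the homogeneous solutions coming from $\mbox{Ker}(\cB^{(j,d)})\cap\dR[x]$ (which, by \eqref{Ker_B}, contribute $\hat K^{(3,d)}_{N-d}$ and $\hat K^{(4,d)}_{-d-1}$). The delicate step in that route is to show which right-hand sides $r$ are actually attainable --- that no polynomial $q$ can produce a $K_d$-component when $j=1$ (resp.\ a $K_{N-d}$-component when $j=3$) --- and the paper asserts this rather tersely. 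You replace that step entirely by a codimension count: the span sits inside $\ker\bar\Theta_j$ by \eqref{Bop:XKtoK}--\eqref{Bop:XKtoZero}, both subspaces have codimension $d$ (the span because exactly $d$ degrees are missing, the kernel because $\bar\Theta_j$ is onto $\dR[x]/(D_j)$), so they coincide. I checked the normal form: the $\xi^{(j)}$- and Pochhammer factors do cancel exactly in all four cases, leaving $D_j=P_d^{(j)}$ of degree $d$, and your surjectivity arguments go through ($\Theta_2$ is degree-preserving and unitriangular, $\Theta_4$ is backward differencing, and for $j=1,3$ the restriction $0\le d\le N$ puts the $d$ simple zeros of $D_j$ in $(0,N)$ while Corollary~\ref{cor:zero} keeps them from differing by $1$). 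What each approach buys: the paper's is constructive (it exhibits the expansion coefficients of $q$ in terms of those of $r$), whereas yours is cleaner on precisely the point the paper glosses over, at the cost of importing Corollary~\ref{cor:zero} here rather than only in Proposition~\ref{prop54}. Do spell out in a final write-up the degree/leading-coefficient verifications for the two separately constructed members $\hat K^{(2,d)}_{N+d+1}$ and $\hat K^{(3,d)}_{N-d}$, since the codimension count silently relies on them occupying the degrees $N+2d+1$ and $N$ already accounted for.
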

\begin{proof}
Assume that $\cB^{(j,d)}[q(x)]\in \dR[x]$ and write $r(x)=\cB^{(j,d)}[q(x)]$. We have
\begin{align}\label{proof:eq1}
\begin{split}
\left\{
\begin{array}{ll}
 p \,(x-N)\, q(x) + (1-p) \,x \,q(x-1) = P_d^{(1)}(x)\,r(x) & \mbox{(if $j=1$)}\\
 p \,q(x) + (1-p) \, q(x-1) = P_d^{(2)}(x)\,r(x) & \mbox{(if $j=2$)}\\
 (N-x) \,q(x) + x \, q(x-1) = P_d^{(3)}(x)\,r(x) & \mbox{(if $j=3$)}\\
 -\,q(x) +  \, q(x-1) = P_d^{(4)}(x)\,r(x) & \mbox{(if $j=4$)}
\end{array}\right. .
\end{split}
\end{align}
If $r(x)$ can be expanded in terms of the Krawtchouk polynomials $\{ K_n(x)\}_{n\in \mathbb{Z}_{\ge 0}}$ as follows
\begin{align}
 r(x)=\sum_{\substack{n =0 \\ n \in \dZ_{\ge 0}^{(j,d)}}}^{m} \tilde \nu_n^{(j,d)} c_n^{(j)} K_n(x) 
\label{expand:r}
\end{align}
with $m= \deg r(x)$,
we can find a particular solution to \eqref{proof:eq1} by  using \eqref{Bop:XKtoK} and \eqref{Bop:XKtoZero}:
\begin{align}
\begin{split}
q(x)= q_{r}^{(j)}(x) &= \sum_{\substack{n =0 \\ n \in \dZ_{\ge 0}^{(j,d)}}}^{m} c_n^{(j)} \hat K_n^{(j,d)}(x) 
\in \mbox{span} \{\hat K_n^{(j,d)}(x)\}.
\end{split}
\end{align}
Note here that if $\{c_n^{(j)}\}$ are arbitrary constants, then $r(x)$ is an arbitrary polynomial of degree $m$ when $j=2$ and $j=4$. 
But if $q(x)$ is expanded in terms of the Krawtchouk polynomial and contains $K_d(x)$ when $j=1$, or $K_{N-d}(x)$ when $j=3$, then the corresponding polynomial $q(x)$ does not exist and the assumption cannot be satisfied. Hence we can see that by excluding $K_d(x)$ when $j=1$ and $K_{N-d}(x)$ when $j=3$ from the linear combination of sequence of the Krawtchouk polynomials, $r(x)$ of the form \eqref{expand:r} is a general polynomial of degree $m$ that can satisfy the assumption \eqref{proof:eq1} for a given $(j,d)$.

Then, by setting $q(x)=q_{0}^{(j)}(x)+q_{r}^{(j)}(x)$, the equation of $q_{0}^{(j)}(x)$ becomes a homogeneous first-order difference equation with $r(x)=0$ in \eqref{proof:eq1}, and we see that $q_{0}^{(j)}(x)$ can be obtained from 
$\mbox{Ker}(\cB^{(j,d)}) \cup \dR[x]$. 
From \eqref{Ker_B}, we obtain
\begin{align}
q_{0}^{(1)}(x) = q_{0}^{(2)}(x) = 0, \quad
q_{0}^{(3)}(x) = \alpha_3 \hat K^{(3,d)}_{N-d}(x), \quad
q_{0}^{(4)}(x) = \alpha_4 \hat K_{-d-1}^{(4,d)}(x),
\end{align}
where $\alpha_3$ and $\alpha_4$ are arbitrary constants, and  
the general solution of \eqref{proof:eq1} 
in the following form:
\begin{eqnarray*}
 q(x) = q_{0}^{(j)}(x) + q_{r}^{(j)}(x).
\end{eqnarray*}
Hence we conclude that $q(x) \in \mbox{\rm span}\{\hat{K}_n^{(j,d)}(x)\}_{n\in \dZ_{\ge 0}^{(j,d)}}$. 
Sufficiency is obvious from \eqref{Bop:XKtoK} and \eqref{Bop:XKtoZero}.
\end{proof}

\begin{proposition}\label{prop54}
For $\pi(x) \in \mathbb{R}[x]$, $\cL^{(j,d)}[\pi(x)] \in \dR[x]$ if and only if $\cB^{(j,d)} [\pi(x)] \in \dR[x]$.
\end{proposition}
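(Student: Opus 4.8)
The plan is to exploit the factorization $\cL^{(j,d)} = \cF^{(j,d)}\circ\cB^{(j,d)} + \lambda_d^{(j)}I$ from \eqref{xoperator}, which immediately gives $\cL^{(j,d)}[\pi(x)] = \cF^{(j,d)}\bigl[\cB^{(j,d)}[\pi(x)]\bigr] + \lambda_d^{(j)}\pi(x)$. Since $\pi(x)\in\dR[x]$ and $\lambda_d^{(j)}$ is a constant, the term $\lambda_d^{(j)}\pi(x)$ is always a polynomial; hence $\cL^{(j,d)}[\pi(x)]\in\dR[x]$ if and only if $\cF^{(j,d)}\bigl[\cB^{(j,d)}[\pi(x)]\bigr]\in\dR[x]$. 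So the statement reduces to showing: for $\pi(x)\in\dR[x]$, one has $\cF^{(j,d)}[\cB^{(j,d)}[\pi(x)]]\in\dR[x]$ if and only if $\cB^{(j,d)}[\pi(x)]\in\dR[x]$.

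The ``if'' direction is the easy half: when $g(x):=\cB^{(j,d)}[\pi(x)]\in\dR[x]$, formula \eqref{Bop:XKtoK} together with Proposition~\ref{prop53} shows that $\pi$ lies in the span of the $\hat K_n^{(j,d)}$, so $\cB^{(j,d)}[\pi(x)]$ is a polynomial combination of Krawtchouk polynomials $K_n(x)$; then $\cF^{(j,d)}$ applied to such a combination yields $\cF^{(j,d)}[K_n(x)] = \nu_n\,\hat K_n^{(j,d)}(x)$ by \eqref{Xkraw-Darboux} (with the appropriate convention when $\nu_n=0$), which is again polynomial. For the ``only if'' direction, I would set $g(x) = \cB^{(j,d)}[\pi(x)]$ and note that $g(x)$ is a priori a rational function of the form (polynomial)$/\phi_d^{(j)}(x)$ times the explicit prefactors appearing in $\cB^{(j,d)}$; writing out $\cF^{(j,d)}[g(x)] = (\eta(x))^{-1}\bigl(\phi_d^{(j)}(x)\,g(x+1) - \phi_d^{(j)}(x+1)\,g(x)\bigr)$ and using that this expression is assumed polynomial, one argues that the potential poles of $g$ — located at the zeros of $\phi_d^{(j)}$ and at the grid points coming from the $p(N-x)$, $x(1-p)$ factors — must actually cancel. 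The mechanism is that $\cF^{(j,d)}$ and $\cB^{(j,d)}$ share the seed function $\phi_d^{(j)}$ in a way that makes the combination $\cF^{(j,d)}\circ\cB^{(j,d)}$ automatically polynomial-preserving on a larger space than each factor alone, yet the only way the intermediate $g$ can have its poles killed by $\cF^{(j,d)}$ without $g$ itself being polynomial is obstructed by a degree/residue count.

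Concretely, for the ``only if'' direction I would suppose $g(x)=\cB^{(j,d)}[\pi(x)]\notin\dR[x]$ and derive a contradiction. Decompose $g = g_{\mathrm{pol}} + g_{\mathrm{sing}}$ into its polynomial part and its proper rational part; the singular part $g_{\mathrm{sing}}$ has poles supported on the finite set $S^{(j,d)}$ of zeros of $\phi_d^{(j)}(x)$ (these are explicit: $x\in\{0,1,\dots,N\}$ for $j=2,4$ from the factor $(x-N)_{N+1}$, and the zeros of $P_d^{(j)}$ in all cases). One then evaluates the residues of $\cF^{(j,d)}[g]$ at the points of $S^{(j,d)}$ and at the shifted set $S^{(j,d)}-1$: at a simple pole $x_0$ of $g$ the numerator $\phi_d^{(j)}(x)g(x+1) - \phi_d^{(j)}(x+1)g(x)$ acquires, after division by $\eta(x)$, a pole at $x_0$ unless $\phi_d^{(j)}(x_0+1)\,\mathrm{Res}_{x_0}g = 0$; since $\phi_d^{(j)}(x_0+1)\neq 0$ for $x_0$ a zero of $P_d^{(j)}$ generic (invoking Corollary~\ref{cor:zero} to guarantee $\phi_d^{(j)}(x)$ and $\phi_d^{(j)}(x+1)$ have no common zero, i.e. $P_d^{(j)}(x)$ and $P_d^{(j)}(x+1)$ share no root), this forces $\mathrm{Res}_{x_0}g=0$, contradicting $g\notin\dR[x]$. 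The main obstacle — and the step needing genuine care — is handling the zeros of $\phi_d^{(j)}$ coming from the $(x-N)_{N+1}$ factor in cases $j=2,4$: there $\phi_d^{(j)}$ has zeros at consecutive integers $0,1,\dots,N$, so $\phi_d^{(j)}(x_0)$ and $\phi_d^{(j)}(x_0+1)$ can both vanish, and one must instead track how $\cB^{(j,d)}$ (which contains the factors $p(N-x)$ and $x$ that themselves vanish at $x=N$ and $x=0$) and $\eta^{(j)}(x)$ conspire to make the pole orders match up exactly; this is precisely the bookkeeping already implicit in the monic normalization \eqref{Xkraw-Darboux}, and I would lean on the explicit forms of $\eta^{(j)}(x)$ and the kernel computations \eqref{Ker_B} to close this case rather than re-deriving everything from scratch.
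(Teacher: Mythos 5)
Your proposal is correct and follows essentially the same route as the paper: assume $\cB^{(j,d)}[\pi]\notin\dR[x]$, locate its poles, and use Corollary~\ref{cor:zero} to show that no two poles sit at adjacent integers, so that applying $\cF^{(j,d)}$ cannot cancel them all and $\cL^{(j,d)}[\pi]$ stays non-polynomial. One remark on the step you flag as the ``main obstacle'': once you substitute the explicit $\eta^{(j)}(x)$ into $\cB^{(j,d)}$, the factor $(x-N+1)_N$ cancels against $(x-N)_{N+1}$ in $\phi_d^{(j)}$ (this is exactly the reduction \eqref{proof:eq1} in the proof of Proposition~\ref{prop53}), so for $j=2,4$ the intermediate function $g$ has \emph{no} poles at the grid points $0,1,\ldots,N$ and its only possible poles are at the zeros of $P_d^{(j)}$ — the delicate case you anticipate does not actually arise, and the residue argument goes through uniformly in $j$ after tracking the harmless extra factors $(a_k+1)$ and $(a_k-N-1)$ that the paper records via $\rho_j,\rho_j^*$.
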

\begin{proof}
Suppose that $\cB^{(j,d)} [\pi(x)]$ is not a polynomial. Then it is given by a rational function of the following form:
\begin{align}
 \cB^{(j,d)} [\pi(x)] = \pi_0(x) +\sum_{k=1}^{m} \dfrac{\pi_k(x)}{(x-a_k)^{\mu_k}},
\label{XBtoPi}
\end{align}
where $\mu_1,\ldots, \mu_m$ are positive integers  and $\pi_{0}(x), \ldots, \pi_{m}(x)$ are some polynomials with the conditions $\pi_k(a_{k}) \ne 0$ for $k=1,2,\ldots,m$.
We here note that $a_k$ are the zeros of $P_d^{(j)}(x)$, i.e. $P_d^{(j)}(a_k)=0$.
Then, by applying $\cF^{(j,d)}$ to \eqref{XBtoPi}, we obtain
\begin{align}
 \cL^{(j,d)} [\pi(x)] = \dfrac{\tilde \pi(x)}{\prod_{k=1}^{m} (x-a_k)^{\mu_k}(x+1-a_k)^{\mu_k}},
\end{align}
where
\begin{align}
\begin{split}
 \tilde \pi(x) =&  (x-a_k)^{\mu_k} (x+1-a_k)^{\mu_k} \pi^{*}(x)
 +\rho_{j}(x) (x-a_k)^{\mu_k} P_d^{(j)}(x) \pi_k(x+1)  \\
&+\rho_{j}^{*}(x) (x+1-a_k)^{\mu_k} P_d^{(j)}(x+1) \pi_k(x)
\end{split}
\end{align}
with $\pi^{*}(x)$ a polynomial and 
\begin{align*}
& \rho_{1}(x)=-1, &&\rho_{1}^{*}(x)=1,\\
& \rho_{2}(x)=N-x, &&\rho_{2}^{*}(x)=1+x,\\
& \rho_{3}(x)=p, &&\rho_{3}^{*}(x)=1-p,\\
& \rho_{4}(x)=p(x-N), &&\rho_{4}^{*}(x)=(1-p)(1+x).
\end{align*}
Using Corollary \ref{cor:zero}, it is shown that if $P_d^{(j)}(a_k) = 0$, then $P_d^{(j)}(a_k+1)\ne 0$ and $P_d^{(j)}(a_k-1) \ne 0$, and further that
$\tilde \pi(a_k)=0 $ and $\tilde \pi(a_k-1)=0$ cannot be true at the same time, since
\begin{align*}
\begin{array}{ll}
P_d^{(j)}(a_k+ 1) \pi_k(a_k) \ne 0,\quad P_d^{(j)}(a_k- 1) \pi_k(a_k) \ne 0
 & (j \in \{1,3\}),\\
(a_k+1)P_d^{(j)}(a_k+1) \pi_k(a_k) \ne 0,\quad   (a_k-N-1)P_d^{(j)}(a_k-1) \pi_k(a_k) \ne 0
 & (j\in \{2,4\}).
\end{array}
\end{align*}
Thus $\cL^{(j,d)}[\pi(x)]$ can never be polynomial if $\cB^{(j,d)}[\pi(x)]$ is not polynomial. Hence, if $\cL^{(j,d)} [\pi(x)]$ is a polynomial, then $\cB^{(j,d)}[\pi(x)]$ is a polynomial.

It is obvious that if $\cB^{(j,d)}[p(x)]$ is a polynomial, then $\cL^{(j,d)} [p(x)]$ is a polynomial.
\end{proof}

\begin{proposition}\label{prop:recurrence}
Let $q_{\pi}(x)$ be a non-constant polynomial such that
\begin{align}
q_{\pi}(x)  \in 
\begin{cases}
\mbox{\rm span} \{\hat K_{n}^{(4,d)}(x-N-1,1-p,-N-2)\}_{n\in \mathbb{Z}_{\ge 0}^{(4,d)}}  & (j=1)\\
\mbox{\rm span}\{\hat K_{n}^{(4,d)}(x,1-p,N)\}_{n\in \mathbb{Z}_{\ge 0}^{(4,d)}}  & (j=2)\\
\mbox{\rm span}\{\hat K_{n}^{(4,d)}(x-N-1,p,-N-2)\}_{n\in \mathbb{Z}_{\ge 0}^{(4,d)}}  & (j=3)\\
\mbox{\rm span}\{\hat K_{n}^{(4,d)}(x,p,N)\}_{n\in \mathbb{Z}_{\ge 0}^{(4,d)}}  & (j=4)
\end{cases}.
\label{rec:spectrum}
\end{align}
Then there exists a sequence $\{c_{n,\ell}^{(j,d)}\}_{n,\ell}$ satisfying
\begin{align}
 q_{\pi}(x) \hat{K}_n^{(j,d)}(x)
= \sum_{\substack{\ell=n-m\\ n+\ell\in\dZ_{\ge 0}^{(j,d)}}}^{n+m}c_{n,\ell}^{(j,d)}  \hat{K}_{\ell}^{(j,d)}(x),
\label{XK:RR}
\end{align}
where $m=\deg q_{\pi}(x) \ge d+1$.
In particular, for the lowest degree, $m=d+1$, $q_{\pi}(x)$ is given by 
\begin{align}\label{rec:spectrum_low}
 q_{\pi}(x)=
\begin{cases}
K_{d+1}(x+1;p,N+1) &(j=1)\\
K_{d+1}(x-N;p,-N-1) &(j=2)\\
K_{d+1}(x+1;1-p,N+1) &(j=3)\\
K_{d+1}(x-N;1-p,-N-1) &(j=4)\\
\end{cases}.
\end{align}
\end{proposition}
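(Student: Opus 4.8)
The plan is to establish two things: (i) the polynomial spaces displayed in \eqref{rec:spectrum} are exactly the admissible multipliers $q_\pi$, i.e.\ those for which an identity of the form \eqref{XK:RR} can hold; and (ii) any such identity is banded with half-width $m=\deg q_\pi$. Since the $\hat K_\ell^{(j,d)}$ with $\ell\in\dZ_{\ge 0}^{(j,d)}$ form a basis of $V^{(j,d)}:=\mbox{\rm span}\{\hat K_\ell^{(j,d)}(x)\}_{\ell\in\dZ_{\ge 0}^{(j,d)}}$, asking for \eqref{XK:RR} for every admissible $n$ is the same as asking $q_\pi(x)\,V^{(j,d)}\subseteq V^{(j,d)}$. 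For (i) I would combine Proposition \ref{prop53} (which says $q\in V^{(j,d)}$ iff $\cB^{(j,d)}[q]\in\dR[x]$) with the fact that $\cB^{(j,d)}$ is first order: substituting $q_\pi(x-1)=q_\pi(x)-(q_\pi(x)-q_\pi(x-1))$ in \eqref{proof:eq1} gives, for $q\in V^{(j,d)}$,
\begin{equation*}
\cB^{(j,d)}[q_\pi q](x)=q_\pi(x)\,\cB^{(j,d)}[q](x)-\frac{\beta_j(x)\,\bigl(q_\pi(x)-q_\pi(x-1)\bigr)\,q(x-1)}{P_d^{(j)}(x)},
\end{equation*}
where $\beta_j(x)$ is the coefficient of $q(x-1)$ in \eqref{proof:eq1}, namely $\beta_1(x)=(1-p)x$, $\beta_2(x)=1-p$, $\beta_3(x)=x$, $\beta_4(x)=1$. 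The first term is a polynomial; at each zero $a_k$ of $P_d^{(j)}$ one has $\beta_j(a_k)\ne 0$, and (using Corollary \ref{cor:zero}) $V^{(j,d)}$ contains elements $q$ with $q(a_k-1)\ne 0$, so polynomiality of $\cB^{(j,d)}[q_\pi q]$ for \emph{all} $q\in V^{(j,d)}$ forces $q_\pi(a_k)=q_\pi(a_k-1)$ for every such $a_k$, i.e.\ $P_d^{(j)}(x)\mid q_\pi(x)-q_\pi(x-1)$; conversely this divisibility makes the correction term polynomial for every $q$. Comparing degrees already yields $m\ge d+1$.

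It then remains to recognise $\{q\in\dR[x]:P_d^{(j)}(x)\mid q(x)-q(x-1)\}$ as the span in \eqref{rec:spectrum}. The $j=4$ line of \eqref{proof:eq1} says that, for any admissible parameters, $\cB^{(4,d)}[q]\in\dR[x]$ iff $P_d^{(4)}(x)\mid q(x)-q(x-1)$, so by Proposition \ref{prop53} the latter set is exactly $\mbox{\rm span}\{\hat K_n^{(4,d)}\}_{n\in\dZ_{\ge 0}^{(4,d)}}$ in those parameters and that variable. One then checks directly from the definitions of the $P_d^{(j)}$ the identities $P_d^{(1)}(x;p,N)=P_d^{(4)}(x-N-1;1-p,-N-2)$, $P_d^{(2)}(x;p,N)=P_d^{(4)}(x;1-p,N)$ and the analogues for $j=3,4$; each substitution has the form $x\mapsto x+\mathrm{const}$, so the backward difference is intertwined, and \eqref{rec:spectrum} results. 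For the minimal degree $m=d+1$ the divisibility becomes $q_\pi(x)-q_\pi(x-1)=c\,P_d^{(j)}(x)$, which fixes $q_\pi$ up to an additive constant and a scalar; the second relation of \eqref{K:rec_variant}, in the form $K_{d+1}(x+1;p,M+1)-K_{d+1}(x;p,M+1)=(d+1)\,K_d(x;p,M)$ specialised to the parameter $M$, to $p$ or $1-p$, and to the argument shift relevant to each $j$, exhibits the polynomials of \eqref{rec:spectrum_low} as solutions.

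For (ii), fix an admissible $q_\pi$ and $n\in\dZ_{\ge 0}^{(j,d)}$; by (i), $q_\pi\hat K_n^{(j,d)}\in V^{(j,d)}$, hence $q_\pi\hat K_n^{(j,d)}=\sum_\ell c_{n,\ell}^{(j,d)}\hat K_\ell^{(j,d)}$. Since $\deg\hat K_\ell^{(j,d)}$ is affine in $\ell$ with unit slope, comparing leading terms gives $c_{n,\ell}^{(j,d)}=0$ for $\ell>n+m$ (with $c_{n,n+m}^{(j,d)}$ equal to the leading coefficient of $q_\pi$). For the lower bound I would use the finite (and in general indefinite) orthogonality of Section \ref{sec5} together with the symmetry of the bilinear form $\langle f,g\rangle^{(j,d)}=\sum_x\hat w^{(j,d)}(x)f(x)g(x)$: for indices in the orthogonality set, $c_{n,\ell}^{(j,d)}\hat h_\ell^{(j,d)}=\langle q_\pi\hat K_n^{(j,d)},\hat K_\ell^{(j,d)}\rangle^{(j,d)}=\langle\hat K_n^{(j,d)},q_\pi\hat K_\ell^{(j,d)}\rangle^{(j,d)}$, and when $\ell<n-m$ the polynomial $q_\pi\hat K_\ell^{(j,d)}\in V^{(j,d)}$ has degree strictly below $\deg\hat K_n^{(j,d)}$, hence is a combination of $\hat K^{(j,d)}$'s of index $<n$, each orthogonal to $\hat K_n^{(j,d)}$; since $\hat h_\ell^{(j,d)}\ne 0$ this gives $c_{n,\ell}^{(j,d)}=0$. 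The indices lying outside the finite orthogonality grid would be brought back to this case via the Diophantine factorizations \eqref{Diophan:first}--\eqref{Diophan:last}.

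I expect the hard part to be exactly this last step. Unlike the exceptional Hermite/Laguerre/Jacobi families, the $X$-Krawtchouk polynomials are only finitely — and typically not positive-definitely — orthogonal, so the usual orthogonal-polynomial argument for the lower band is not available uniformly in $n$ and must be supplemented by the Diophantine relations of Section \ref{sec5}. In addition, throughout the argument one has to treat separately the exceptional indices ($n=d$ for $j=1$, $n=N-d$ for $j=3$, $n=-d-1$ for $j=4$, $n=N+d+1$ for $j=2$), where some of the normalisers $\nu_n^{(j,d)}$ or $\tilde\nu_n^{(j,d)}$ vanish and the generic reasoning above needs minor modifications.
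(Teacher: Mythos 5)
Your part (i) is sound and is essentially the paper's own mechanism seen from a different angle: the divisibility criterion $P_d^{(j)}(x)\mid q_{\pi}(x)-q_{\pi}(x-1)$ that you extract from the product rule for $\cB^{(j,d)}$ is exactly what makes the paper's auxiliary polynomial $\pi(x)=p(N-x)\,\eta^{(j)}(x)\,(\xi^{(j)}(x)P_d^{(j)}(x))^{-1}(q_\pi(x)-q_\pi(x-1))$ well defined, and your identification of the spans in \eqref{rec:spectrum} via the $j=4$ line of \eqref{proof:eq1} and the parameter substitutions is a legitimate (and arguably more transparent) way to see why those spans are the right ones; the bound $m\ge d+1$ and the minimal solutions \eqref{rec:spectrum_low} come out the same way in both treatments.

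The genuine gap is in your part (ii), the lower edge of the band. You derive $c_{n,\ell}^{(j,d)}=0$ for $\ell<n-m$ from the symmetry of the bilinear form $\langle f,g\rangle=\sum_x\hat w^{(j,d)}(x)f(x)g(x)$, but this form lives on a finite grid ($N$ or $N+2$ points), is degenerate on $\dR[x]$ (every polynomial vanishing on the grid lies in its radical), and is in general indefinite; the argument therefore only controls coefficients $c_{n,\ell}$ with both $n$ and $\ell$ inside the finite orthogonality set $X_{(j,d)}$, and the reduction of the remaining (infinitely many) indices $n>N$ via the Diophantine factorizations is asserted but not carried out — you would also have to check that the Diophantine shift $N\mapsto -N-2$ is compatible with the multiplier $q_\pi$ and with the band structure, which is not automatic. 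The paper avoids all of this: it applies $\cB^{(j,d)}$ to both sides of the tentative expansion $q_\pi\hat K_n^{(j,d)}=\sum_\ell c_{n,\ell}^{(j,d)}\hat K_\ell^{(j,d)}$, obtaining $\sum_\ell c_{n,\ell}^{(j,d)}\tilde\nu_\ell^{(j,d)}K_\ell$ on one side and, on the other, the explicitly banded Krawtchouk expansion of $\pi(x)\hat K_n^{(j,d)}(x)+q_\pi(x)\tilde\nu_n^{(j,d)}K_n(x)$ coming from \eqref{K:TRR} and \eqref{K:rec_variant}; linear independence of the $K_\ell$ then kills $c_{n,\ell}^{(j,d)}$ outside $[n-m,n+m]$ for every $\ell$ with $\tilde\nu_\ell^{(j,d)}\ne0$, uniformly in $n$, and only the two exceptional coefficients $c_{n,N-d}^{(3,d)}$ and $c_{n,-d-1}^{(4,d)}$ need the orthogonality/Diophantine arguments you were trying to use globally. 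You should replace your orthogonality step by this comparison of $\cB^{(j,d)}$-images (which you already have all the ingredients for, since your part (i) computes $\cB^{(j,d)}[q_\pi q]$), and reserve the bilinear-form and Diophantine arguments for the two degenerate indices only.
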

\begin{proof}  
Let us define
\begin{align}
  \pi (x)=p(N-x) \dfrac{\eta^{(j)}(x)}{\xi^{(j)}(x) P^{(j)}_d(x)}
\left( q_{\pi}(x) - q_{\pi}(x-1) \right) .
\label{def:pi}
\end{align}
From \eqref{rec:spectrum}, one finds that
$\pi(x)\in \dR[x]$ and
\begin{align}
\deg \pi(x) = m - d -\delta_{2,j}-\delta_{4,j}.
\end{align}
Note that for $j=1,3$, $P^{(j)}_d(x)$ does not have a factor of $x-N$ and then $\pi(x)$ must have a factor of $x-N$ in order to satisfy the assumption of this proposition.
Thus it follows that $m  \ge d+1$, since
$\deg \pi(x) \ge \delta_{1,j}+\delta_{3,j}$. 
By applying $\cB^{(j,d)}$ to $q_{\pi}(x) \hat K_n^{(j,d)}(x)$, we obtain 
\begin{align}
\begin{split}
& {\cB^{(j,d)}}[q_{\pi}(x) \hat K_n^{(j,d)}(x)]  \\
&  \qquad = \dfrac{p(N-x)\eta^{(j)}(x)}{\xi^{(j)}(x) P^{(j)}_d(x)}(I-T^{-1})[q_{\pi}(x)]\,\hat K_n^{(j,d)}(x) + 
q_{\pi}(x) {\cB^{(j,d)}}[\hat K_n^{(j,d)}(x)]  \\
&  \qquad = 
\pi(x)\hat K_n^{(j,d)}(x) + 
q_{\pi}(x) \tilde \nu_n^{(j,d)} K_n(x) 
\\
&  \qquad = 
 \sum_{\ell = n-m}^{n+m} c_{\ell}^{*}\,K_{\ell}(x)
\in \dR[x],
\end{split}
\label{expand:BqXK in K}
\end{align}
where we have used \eqref{K:TRR} and \eqref{K:rec_variant}.
With the help of Proposition \ref{prop53} and \ref{prop54}, $q_{\pi }(x) \hat K_n^{(j,d)}(x)$ can be presented as a linear combination of the exceptional Krawtchouk polynomial $\hat K_n^{(j,d)}$:
\begin{align}
 q_{\pi}(x) \hat{K}_n^{(j,d)}(x)
= \sum_{\ell} c_{n,\ell}^{(j,d)}\,  \hat{K}_{\ell}^{(j,d)}(x).
\label{qXK_n in K}
\end{align} 
By applying $\cB^{(j,d)}$ to both sides of \eqref{qXK_n in K}, we obtain
\begin{align}
 \cB^{(j,d)}[q_{\pi}(x) \hat{K}_n^{(j,d)}(x)]
= 
\sum_{\ell} c_{n,\ell}^{(j,d)}\,\tilde \nu_{\ell}^{(j,d)}\,  {K}_{\ell}(x),
\end{align} 
which can be considered together with \eqref{expand:BqXK in K} to define the range of the sum via
$c_{n,\ell}^{(j,d)} = 0$ for $ |\ell-n|>d+m$ if $\tilde \nu_n^{(j,d)} \ne 0$. 
Notice that $\tilde{\nu}^{(4,d)}_{-d-1}=\tilde{\nu }_{N-d}^{(3,d)}=0$ and thus we can not find $c_{n,-d-1}^{(4,d)}$ and $c_{n,N-d}^{(3,d)}$ from the above method.
We can determine $c_{n,-d-1}^{(4,d)}$ as follows. From the discussion in Sec. \ref{sec:orthogonality_24}, we recall the orthogonality relation of the type-4 $X$-Krawtchouk polynomials given by
\begin{equation}
\left< \hat{K}_{n}^{(4,d)}(x),\hat{K}_{n}^{(4,d)}(x)\right>_4 = \sum_{x=-1}^N \hat{w}^{(4,d)}(x)\hat{K}_{n}^{(4,d)}(x),\hat{K}_{n}^{(4,d)}(x) =\hat{h}_{n}^{(4,d)}\delta_{mn}
\end{equation} 
with $\hat{h}_n^{(4,d)}\ne 0$ for $m,n\in X_{(4,d)}$.
When $m-d-1< n\le N$, one sees from the definition that $q_{\pi}(x) \in \mbox{span}\{\hat K_{-d-1}^{(4,d)}, \hat K_{0}^{(4,d)},\ldots,\hat K_{m-d-1}^{(4,d)}\}$ and finds that 
\begin{align}
\left< q_{\pi }(x),\hat{K}_{n}^{(4,d)}(x)\right>_4=\left< 1, q_{\pi }(x)\hat{K}_{n}^{(4,d)}(x)\right>_4=\left< K_{-d-1}^{(4,d)}(x), q_{\pi }(x)\hat{K}_{n}^{(4,d)}(x)\right>_4=0.
\end{align}
Therefore, from \eqref{qXK_n in K}, one concludes that $c_{n,-d-1}^{(4,d)}=0~~(m-d-1<n\le N)$. For $n>N$, from the Diophantine property \eqref{Diophan:4-3} and \eqref{qXK_n in K}, one sees that the following relation holds:
\begin{align}
\begin{split}
 0& = q_{\pi}(k) \hat K_n^{(4,d)}(k) = \sum_{\ell \in X_{(4,d)}} c_{n,\ell}^{(4,d)} \hat K_{\ell}^{(4,d)}(k)\quad (k=-1,0,\ldots ,N).
\end{split}
\end{align}
From the linear independence of the $X$-Krwatchouk polynomials $\{ \hat{K}_n^{(4,d)}(x)\} _{n\in X_{(4,d)}}$ on $x=-1,0,\ldots ,N$, one finds that $c_{n,-d-1}^{(4,d)}=0~~(n>N)$.\\
With respect to $c_{n,N-d}^{(3,d)}$, from the Diophantine property \eqref{Diophan:3-4} and the recurrence relation of the $X$-Krawtchouk polynomials of type 4, we can immediately see that $c_{n,N-d}^{(3,d)}=0\quad (n>m+N-d)$. The relation $c_{n,N-d}^{(3,d)}=0\quad (n<-m+N-d)$ is also verified by comparing both sides of \eqref{qXK_n in K}. Finally, it is shown that \eqref{XK:RR} holds including for the cases where $\tilde \nu^{(j,d)}_{n}=0$.
Eq. \eqref{rec:spectrum_low} is almost obvious from \eqref{K:dual}, \eqref{K:rec_variant} and \eqref{Xkraw-Darboux}.
\end{proof}

If $\cF^{(j,d)}[K_n(x)] \ne 0$, one observes from \eqref{expand:BqXK in K} that
\begin{align}
\begin{split}
& \pi(x)\hat K_n^{(j,d)}(x) + q_{\pi}(x) \tilde \nu_n^{(j,d)} K_n(x) \\
&  \qquad = \tilde q_{m-1}^{(j)}(x) (N-x) K_{n}(x+1) + \tilde q_{m}^{(j)}(x) K_{n}(x)   + q_{\pi}(x) \tilde \nu_n^{(j,d)} K_n(x),
\end{split}
\end{align}
where
\begin{align}
\begin{split}
\tilde q_{m-1}^{(j)}(X) &= -(\tilde\nu_n^{(j,d)})^{-1}\,(-p)^{\delta_{3,j}+\delta_{4,j}} (x-N )^{-\delta_{1,j}-\delta_{3,j}}\pi(x) P_d^{(j)}(x),\\
\tilde q_{m}^{(j)}(X) &=  (\tilde\nu_n^{(j,d)})^{-1} (1-p)^{\delta_{3,j}+\delta_{4,j}}(x+1)^{\delta_{2,j}+\delta_{4,j}}\,\pi(x) P_d^{(j)}(x+1)
\end{split}
\end{align}
with 
\begin{equation}
\pi(x)=p(N-x)\dfrac{\eta^{(j)}(x)}{\xi^{(j)}(x) P_d^{(j)}(x)}(I-T^{-1})[q_{\pi}(x)].
\label{def:pi}
\end{equation}
Therefore, using \eqref{K:TRR} and \eqref{K:rec_variant}, one can exactly calculate the coefficients of the recurrence relation \eqref{XK:RR}.
\begin{corollary} 
Let $q_{\pi}(x)$ and $\pi (x)$ be polynomials satisfying \eqref{rec:spectrum} and \eqref{def:pi}, and $X$ be an operator which acts on a basis $\{ e_n\}_{n=0}^{\infty }$ as follows:
\begin{align}
 X \{e_n\}& = e_{n+1} + b_n e_{n} + u_n e_{n-1},\quad n=0,1,\ldots 
\end{align}
with $b_n = p(N-n)+n(1-p), u_n = (N+1-n)n  p (1-p)$.
We introduce the constants $\{\hat{c}_{n,\ell }^{(j,d)}\}$ defined by the following relation:
\begin{align}
\begin{split}
& \tilde q_{m-1}^{(j)}(X) \{e_{n+1} + (2n-N)(1-p) e_n + n(n-N-1)(1-p^2)e_{n-1}\} \\ 
&\qquad + \left(\tilde q_{m}^{(j)}(X)  + \tilde \nu_n^{(j,d)}  q_{\pi}(X) \right) \{e_n\} = \sum_{\ell=n-m}^{n+m} \hat{c}_{n,\ell}^{(j,d)}\, \tilde \nu_{\ell}^{(j,d)}\, e_{\ell}
\end{split}
\end{align}
For $(j,n) \ne (2,N+d-1), (4,-d-1)$, the following relation holds between $\{\hat{c}_{n,\ell }^{(j,d)}\}$ and  $\{c_{n,\ell }^{(j,d)}\}$ in \eqref{XK:RR}: 
\begin{equation}
\hat{c}_{n,\ell}^{(j,d)} = c_{n,\ell}^{(j,d)},\quad (j,\ell )\ne (3,N-d),(4,-d-1).
\end{equation}
\end{corollary}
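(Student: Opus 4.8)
The plan is to extract the recurrence coefficients by the same mechanism used in Proposition~\ref{prop:recurrence}, but now working entirely on the abstract level of the operator $X$ acting on a free basis $\{e_n\}$, and then match this with the concrete computation already carried out in \eqref{expand:BqXK in K}. The key observation is that the map $e_n \mapsto K_n(x)$ intertwines $X$ (the Jacobi operator with entries $b_n,u_n$) with multiplication by $x$ on $\mathbb{R}[x]$, precisely because of the three-term recurrence \eqref{K:TRR}; likewise $X$ applied to the shifted displacement $e_{n+1}+(2n-N)(1-p)e_n+n(n-N-1)(1-p^2)e_{n-1}$ mirrors the variant recurrence \eqref{K:rec_variant} for $(N-x)K_n(x+1)$. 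Hence applying the polynomial operator $\tilde q_{m-1}^{(j)}(X)$ and $\tilde q_m^{(j)}(X)+\tilde\nu_n^{(j,d)}q_\pi(X)$ to these vectors produces, under $e_\ell \mapsto K_\ell(x)$, exactly the polynomial $\pi(x)\hat K_n^{(j,d)}(x)+q_\pi(x)\tilde\nu_n^{(j,d)}K_n(x)$ displayed just before the corollary, i.e. the expression $\cB^{(j,d)}[q_\pi(x)\hat K_n^{(j,d)}(x)]$ from \eqref{expand:BqXK in K}. So the defining relation for the $\hat c^{(j,d)}_{n,\ell}$ is nothing but the $e_\ell\mapsto K_\ell$ preimage of $\cB^{(j,d)}[q_\pi(x)\hat K_n^{(j,d)}(x)]=\sum_\ell c^{(j,d)}_{n,\ell}\tilde\nu^{(j,d)}_\ell K_\ell(x)$.

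**First I would** set up this intertwining map carefully: define $\Phi:\mathrm{span}\{e_n\}\to\mathbb{R}[x]$ by $\Phi(e_n)=K_n(x)$, note that $\Phi$ is a linear bijection onto $\mathbb{R}[x]$ (the $K_n$ form a basis), and verify from \eqref{K:TRR} that $\Phi\circ X=(x\cdot)\circ\Phi$, hence $\Phi\circ f(X)=f(x\cdot)\circ\Phi$ for any polynomial $f$. Then I would check, using the first identity in \eqref{K:rec_variant}, that $\Phi$ sends the bracketed vector $e_{n+1}+(2n-N)(1-p)e_n+n(n-N-1)(1-p^2)e_{n-1}$ to $(N-x)K_n(x+1)$ (absorbing the sign, since $(x-N)K_n(x+1)$ appears in \eqref{K:rec_variant} with $(1-p)^2=1-p^2$ only after collecting terms — this is the one spot where I would slow down and confirm the coefficient bookkeeping matches the $\tilde q^{(j)}_{m-1},\tilde q^{(j)}_m$ in the text). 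Applying $\Phi$ to the defining relation for $\{\hat c^{(j,d)}_{n,\ell}\}$ then yields
\[
\tilde q_{m-1}^{(j)}(x)(N-x)K_n(x+1)+\bigl(\tilde q_m^{(j)}(x)+\tilde\nu_n^{(j,d)}q_\pi(x)\bigr)K_n(x)=\sum_{\ell=n-m}^{n+m}\hat c^{(j,d)}_{n,\ell}\tilde\nu^{(j,d)}_\ell K_\ell(x),
\]
whose left side is exactly $\pi(x)\hat K_n^{(j,d)}(x)+q_\pi(x)\tilde\nu_n^{(j,d)}K_n(x)=\cB^{(j,d)}[q_\pi(x)\hat K_n^{(j,d)}(x)]$ by the displayed formulas preceding the corollary.

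**Next**, comparing with \eqref{expand:BqXK in K}, which reads $\cB^{(j,d)}[q_\pi(x)\hat K_n^{(j,d)}(x)]=\sum_\ell c^{*}_\ell K_\ell(x)$, and with the conclusion of Proposition~\ref{prop:recurrence} that $q_\pi(x)\hat K_n^{(j,d)}(x)=\sum_\ell c^{(j,d)}_{n,\ell}\hat K_\ell^{(j,d)}(x)$ followed by $\cB^{(j,d)}$-application giving $\sum_\ell c^{(j,d)}_{n,\ell}\tilde\nu^{(j,d)}_\ell K_\ell(x)$, I would conclude $\hat c^{(j,d)}_{n,\ell}\tilde\nu^{(j,d)}_\ell=c^{(j,d)}_{n,\ell}\tilde\nu^{(j,d)}_\ell$ for all $\ell$ in the summation range. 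Since $\tilde\nu^{(j,d)}_\ell\ne 0$ precisely when $(j,\ell)\notin\{(3,N-d),(4,-d-1)\}$ (by \eqref{Bop:XKtoZero} and the formula for $\tilde\nu^{(j,d)}_n$), cancelling the nonzero factor gives $\hat c^{(j,d)}_{n,\ell}=c^{(j,d)}_{n,\ell}$ for those $\ell$, which is the claimed equality. The exclusion $(j,n)\ne(2,N+d-1),(4,-d-1)$ on the row index is needed so that $\tilde\nu^{(j,d)}_n\ne 0$, ensuring the term $q_\pi(X)\{e_n\}$ on the left and $q_\pi(x)\tilde\nu^{(j,d)}_nK_n(x)$ correspond correctly and that \eqref{expand:BqXK in K} applies as stated (for $(4,-d-1)$ one has $\hat K_{-d-1}^{(4,d)}=1$ and $\cB^{(4,d)}$ annihilates it, breaking the bijection argument; for $(2,N+d-1)$ a degree-collapse in $\cB^{(2,d)}$ occurs analogously).

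**The main obstacle** I expect is purely organizational rather than deep: making the correspondence $\Phi$ and the coefficient matching airtight across all four types $j$ simultaneously, since the operators $\tilde q^{(j)}_{m-1},\tilde q^{(j)}_m$ carry type-dependent prefactors $(-p)^{\delta_{3,j}+\delta_{4,j}}$, $(x-N)^{-\delta_{1,j}-\delta_{3,j}}$, $(x+1)^{\delta_{2,j}+\delta_{4,j}}$ that must be shown to produce genuine polynomial operators in $X$ (not rational ones) exactly on the spectral span \eqref{rec:spectrum} — this is guaranteed by the very hypothesis on $q_\pi$ together with Proposition~\ref{prop53}, but writing it cleanly requires care. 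A secondary subtlety is confirming the summation range $\ell\in[n-m,n+m]$ is respected on the abstract side: this follows because $X$ is tridiagonal so $\tilde q^{(j)}_{m-1}(X)$ and $\tilde q^{(j)}_m(X)$ have bandwidth $\le m$, and $q_\pi(X)$ has bandwidth $\le m$, applied to vectors supported near $e_n$; I would note this bandwidth bound explicitly but not belabor it.
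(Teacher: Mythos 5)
Your proposal is correct and takes essentially the same route as the paper, which states this corollary without a separate proof because it is exactly the formalization of the preceding display $\pi(x)\hat K_n^{(j,d)}(x)+q_{\pi}(x)\tilde \nu_n^{(j,d)}K_n(x)=\tilde q_{m-1}^{(j)}(x)(N-x)K_n(x+1)+\tilde q_m^{(j)}(x)K_n(x)+q_{\pi}(x)\tilde \nu_n^{(j,d)}K_n(x)$ combined with \eqref{K:TRR}, \eqref{K:rec_variant} and comparison against $\sum_{\ell}c_{n,\ell}^{(j,d)}\tilde\nu_{\ell}^{(j,d)}K_{\ell}(x)$; your intertwining map $\Phi(e_n)=K_n(x)$ is the right way to make that precise, and the cancellation of $\tilde\nu_{\ell}^{(j,d)}\ne 0$ gives exactly the stated exclusions on $\ell$. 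Two bookkeeping caveats: $(1-p)^2\ne 1-p^2$, so the mismatch with \eqref{K:rec_variant} is a typo to be resolved rather than an identity to be "absorbed"; and the exclusion of $(j,n)=(2,N+d-1)$ is not explained by $\tilde\nu_n^{(2,d)}$ vanishing (it equals $1$ for all $n$) but by the failure of the Casoratian representation \eqref{Xkraw-Darboux}, on which the $\tilde q$-decomposition rests, at the index where $\nu_n^{(2,d)}=0$.
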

It should be remarked here that 
$c_{n,N-d}^{(3,d)}$ and $c_{n,-d-1}^{(4,d)}$ can be automatically identified from the recurrence relation \eqref{qXK_n in K}.
Furthermore, $\{ c_{N+d-1,l}^{(2,d)}\}$ can be calculated in a similar manner by using \eqref{x-kraw2:N+d+1} and $\{ c_{-d-1,l}^{(4,d)}\}$ are given from \eqref{rec:spectrum} since $q_{\pi }(x)\hat{K}_{-d-1}^{(4,d)}(x)=q_{\pi }(x)$. 

\subsection{Concrete example: $K_{n}^{(2,2)}(x)$}
Here we give the explicit form of the properties of the $X$-Krawtchouk polynomials $\{ \hat{K}_{n}^{(j,d)}(x)\}$ with $j=2$ and $d=2$, that were used in \cite{mtv}.
For $x=-1,0,\ldots ,N$ and $n \in X_{2,2}=\{ 0,1,\ldots ,N,N+3\}$, the type 2 exceptional Krawtchouk polynomials $\{\hat{K}_n^{(2,2)}(x)\}_{n\in X_{2,2}}$ are defined by
\begin{equation}
\hat{K}_{n}^{(2,2)}(x)=\dfrac{1}{N+3-n}\left|\begin{array}{ll}
   (N-x)K_2(x-N+1;p,-N-2) & K_n(x) \\
   -(1+x)K_2(x-N+2;p,-N-2) & K_n(x+1)\\
   \end{array}\right|
\end{equation}
for $n=0,1,\ldots ,N$ and 
\begin{align}
\begin{split}
\hat{K}_{N+3}^{(2,2)}(x)=&\sum_{0 \le k,j \le 2} \frac{(-2)_j(-2)_k(p-1)^{2-k}p^{2-j}}{j!k!}
(-N-3)_{2-j}(-N-3)_{2-k} \\
&\times K_{N+k+j+1}(x+k+1;p,N+k+j+1).
\end{split}
\end{align}
The orthogonality relation is given by
\begin{equation}
 \sum_{x=-1}^{N}  \hat w^{(2,2)}(x) \hat{K}_n^{(2,2)}(x) \hat{K}_m^{(2,2)}(x) = \hat h_n^{(2,2)} \delta_{n,m},
\end{equation}
where 
\begin{align}
\begin{split}
  \hat w^{(2,2)}(x) 
&= \dfrac{w(x+1;p,N+1) }{K_2(x-N-1;p,-N-2)K_2(x-N;p,-N-2)} , \\
  \hat h^{(2,2)}_n &= \,2!\, (N+1)! \,(N+3)!\, (1-p)^{N+3} p^{N+3}.
  \end{split}
\end{align}
It can be easily shown that $K_2(x;p,-N-2)>0$ for $x \in (-N-2,0)$ and $p \in (0,1)$, which guarantees the positivity of the weight function $\hat w^{(2,2)}(x)$ on $x\in \{ -1,0,\ldots ,N\}$.
$\{\hat{K}_n^{(2,2)}(x)\}$ satisfies the following 7-term recurrence relation:
\begin{align}\label{ekraw:rec2}
\begin{split}
 q_3(x) \hat{K}_n^{(2,2)}(x)&= c_{n,3}\hat{K}_{n+3}^{(2,2)}(x)+c_{n,2} \hat{K}_{n+2}^{(2,2)}(x)+c_{n,1}\hat{K}_{n+1}^{(2,2)}(x)\\
 &+c_{n,0} \hat{K}_n^{(2,2)}(x)\\
 &+c_{n,-1} \hat{K}_{n-1}^{(2,2)}(x)+c_{n,-2}\hat{K}_{n-2}^{(2,2)}(x)+c_{n,-3} \hat{K}_{n-3}^{(2,2)}(x)
 \end{split}
 \end{align}
with
\begin{align}\label{ekraw:rec2-par}
\begin{split}
 q_3(x)&=K_3(x-N;p-N-1)-K_3(-1-N;p,-N-1),\\
 c_{n,3}&=1,\\
 c_{n,2}&=3(N-n+1)(2p-1),\\
 c_{n,1}&=3(N-n+2)
 \left\{ N-n+1-(4N-5n+2)p(1-p)\right\},\\
c_{n,0}&=-\sum_{ -3 \le \ell \le 3, \ell \ne 0 }\dfrac{(3-N)_{\ell}(-N)_{\ell} }{(1-N)_{\ell}}p^{\ell}c_{n,\ell},\\
 c_{n,-1}&=3 (N-n+1) (-n) (p-1)p(N-n+4)
 \left\{ N-n+2-(4N-5n+7)p(1-p)\right\},\\
 c_{n,-2}&=3(N-n+1)_2(-n)_2(p-1)^2p^2(N-n+5)(2p-1),\\
 c_{n,-3}&=(N-n+1)_2(-n)_3(p-1)^3p^3(N-n+6).  
\end{split}
\end{align}

Note that $\hat K_{N+1}^{(2,2)}(x)$ and $\hat K_{N+2}^{(2,2)}(x)$, which are zero-valued at $x=-1,0, \ldots, N$ on the grid, also appear in \eqref{ekraw:rec2}.  
Furthermore, when $\frac{1}{2}<p<1$, the coefficients $\{ c_{n,k}\}_{k\ne 0}$ are all positive and when $p=\frac{1}{2}$, $c_{n,2}=c_{n,-2}=0$ even though the weight function $\hat{w}^{(2,2)}(x)$ still takes positive value at $x=-1,0,\ldots ,N$. 

\section{Concluding Remarks}\label{sec6}
The exceptional Krawtchouk polynomials derived by a single-step Darboux transformation fall into four classes. Their construction confirmed that, like the other exceptional orthogonal polynomials, the resulting sequence of polynomials has a gap in degree. The weight function which determines their orthogonality is obtained by multiplying the weight function of the Krawtchouk polynomial by an appropriate rational function. 
Because of the symmetries of the Krawtchouk polynomials, there are various relations among the four classes of exceptional Krawtchouk polynomials.
Their factorization or Diophantine properties were also found and revealed the duality between case 1 and case 2 of the exceptional Krawtchouk polynomials, as well as between cases 3 and 4.
It was furthermore shown that the space spanned by the exceptional Krawtchouk polynomials can be characterized as a subspace of polynomials obtained from the action (on polynomials) of the exceptional Krawtchouk operator, thereby showing without using the limiting procedure from the exceptional Meixner polynomials, that there exist $2d+3$-term recurrence relations for the exceptional Krawtchouk polynomials.

A task that remains is the characterization of the multi-indexed Krawtchouk polynomials resulting from multi-step Darboux transformations.
Like the Krawtchouk polynomials that have many applications in probability theory, stochastic processes, coding theory, quantum mechanics, etc., the exceptional Krawtchouk polynomials presented in this paper are poised to be similarly useful.
This entails fascinating questions, some of which we plan to examine.

\section*{Acknowledgement}
The research of HM and ST is supported by JSPS KAKENHI (Grant Numbers 21H04073 and 19H01792 respectively) and that of LV by a discovery grant of the Natural Sciences and Engineering Research Council (NSERC) of Canada.

\bibliographystyle{tfs}
\bibliography{ekraw}

\end{document}